\newcommand{\invo}{\overline{\rule{2.5mm}{0mm}\rule{0mm}{4pt}}}
\newcommand{\calQ}{Q_\fq}
\newcommand{\calI}{\mathcal{I}}
\newcommand{\Z}{\mathbb{Z}}
\newcommand{\FQ}{F}
\newcommand{\fq}{\mathcal{F}}
\newcommand{\op}{\dag}
\DeclareMathOperator{\Sesq}{Sesq}
\DeclareMathOperator{\Quad}{Quad}
\DeclareMathOperator{\Herm}{Herm}
\DeclareMathOperator{\sw}{sw}
\DeclareMathOperator{\charac}{char}
\DeclareMathOperator{\End}{End}
\DeclareMathOperator{\Hom}{Hom}
\DeclareMathOperator{\Trd}{Trd}
\DeclareMathOperator{\Sym}{Sym}
\DeclareMathOperator{\Skew}{Skew}
\newtheorem{prop}{Proposition}[section]
\newtheorem{lemma}[prop]{Lemma}
\newtheorem*{thm}{Theorem}
\newtheorem*{question}{Question}
\newtheorem{corol}[prop]{Corollary}
\newtheorem*{corol*}{Corollary}
\theoremstyle{remark}
\newtheorem*{remk}{Remark}
\title[New proof of the Artin--Springer theorem]{A new proof of the
  Artin--Springer theorem\\ in Schur index $2$} 
\author{Anne Qu\'eguiner-Mathieu}
\address{Universit\'e Sorbonne Paris Nord\\
Institut Galilée \\
LAGA - CNRS (UMR 7539)\\
F-93430 Villetaneuse, France}
\email{queguin@math.univ-paris13.fr}
\author{Jean-Pierre Tignol}
\address{ICTEAM Institute, Box L4.05.01\\
UCLouvain\\
B-1348 Louvain-la-Neuve, Belgium}
\email{jean-pierre.tignol@uclouvain.be}
\thanks{The second author acknowledges support
    from the Fonds de la Recherche Scientifique--FNRS under grants
    n$^\circ$~J.0149.17 and J.0159.19. Both authors acknowledge
    support from Wallonie--Bruxelles International and the French
    government in the framework of ``Partenariats Hubert Curien''
    (Projects \emph{Groupes alg\'ebriques, anisotropie et invariants
      cohomologiques} and \emph{$2$-alg\`ebres de degr\'e~$8$ et
      d'exposant~$2$}.)}
\date{Typeset \today}
\keywords{generalized quadratic form, quaternion algebra, function
  field of conics}
\subjclass{11E39, 14H45}
\begin{document}
\maketitle

\begin{abstract}
We provide a new proof of the analogue of the Artin--Springer theorem
for groups of type~$\mathsf{D}$ that can be represented by similitudes over an
algebra of Schur index $2$: an anisotropic generalized quadratic form
over a quaternion algebra $Q$ remains anisotropic after generic
splitting of $Q$, hence also under odd-degree field extensions of the
base field. Our proof is characteristic-free and does not use the
excellence property.  
\end{abstract}

A well-known theorem, published by Springer in 1952, but already known
to Artin\footnote{See Witt's review of Springer's paper on
  Zentralblatt.} in 1937, states that an anisotropic quadratic 
form over a field remains anisotropic after odd-degree field
extensions (see~\cite{Sp} for Springer's original paper
and~\cite[Cor.~18.5]{EKM} for a characteristic-free
proof). Equivalently, this means that anisotropy is preserved under odd degree field extensions for the group of
similitudes of a quadratic form over a field. 
Whether the same
property holds for every simple linear algebraic group of
type~$\mathsf{D}$ 
is a largely open question\footnote{A weaker question is whether
  non-hyperbolic hermitian forms remain non-hyperbolic under
  odd-degree field extensions. An affirmative answer was given by
  Bayer--Lenstra~\cite{BL}, and provides a partial solution to a
  question raised by Serre~\cite[5.3(ii)]{Se} in the paper where he
  formulates his Conjectures I and II: is it possible to associate to
  every type of 
  semisimple group $G$ an integer $d$ such that the scalar extension
  map $H^1(k,G)\to H^1(K,G)$ is injective for every field extension
  $K/k$ of degree prime to $d$? }, stated for instance
in~\cite[\S\,7]{ABGV}. No counterexample is known; see~\cite{BQM} for
a survey of known results.

A general strategy to tackle this question was proposed, and
implemented in a special case, by
Parimala--Sridharan--Suresh~\cite{PSS}. 
Assume an algebraic group $G$ over a field $k$ can be represented after scalar extension to a field $F$ as the group of similitudes of an anisotropic quadratic form. For every odd-degree field extension $\ell/k$, the group remains anisotropic after scalar extension to a composite extension $F\cdot \ell$ of odd degree over $F$, by the Artin--Springer theorem. Therefore, it is anisotropic over $\ell$. 

In characteristic different from~$2$, Karpenko~\cite{Kar} proved the
converse: if an anisotropic linear algebraic group of
type~$\mathsf{D}$ remains anisotropic over every odd-degree extension
of the base field, then there exists an extension over which it can be
represented (up to isogeny) as the group of similitudes of an
anisotropic quadratic form. The same holds in characteristic~$2$, as
was shown by Medhi~\cite{Medhi}. Therefore, whether the
Artin--Springer property holds for groups of type~$\mathsf{D}$ is
actually equivalent to the following question:

\begin{question}
  Let $G$ be an anisotropic simple linear algebraic group of
  type~$\mathsf{D}$ over a field $k$. Is there a field extension $F$
  of $k$ such that the group $G_F$ obtained from $G$ by scalar
  extension to $F$ can be represented (up to isogeny) as the group of
  similitudes of an anisotropic quadratic form over $F$?
\end{question}

Parimala--Sridharan--Suresh~\cite{PSS} dealt with the case where $G$
can be represented as the group of similitudes of a skew-hermitian
space over a quaternion division algebra in characteristic different
from~$2$. The question above then amounts to the following: do
anisotropic skew-hermitian spaces remain anisotropic after scalar
extension to a field that splits the quaternion algebra? It has an
affirmative answer with $F$ 
the function field of the Severi--Brauer conic of $2$-dimensional
right ideals in the quaternion algebra. 
Note that the same argument as in~\cite{PSS} also works in
characteristic $2$, combining results from~\cite{BD} and \cite{MT} as
explained below. 

To address those questions in characteristic~$2$, a choice needs to be made between two different
ways of representing simple linear algebraic groups of
type~$\mathsf{D}$: either by similitudes of a generalized quadratic
form, as in~\cite[\S\,3]{Tits}, or by similitudes of a quadratic pair,
as in~\cite[\S\,26]{BoI}. In this paper, we take the option of representing groups of type~$\mathsf{D}$ by
generalized quadratic spaces. We give a new and characteristic-free
proof of the Parimala--Sridharan--Suresh result in the following form:

\begin{thm}
  Let $Q$ be a quaternion division algebra over an arbitrary field
  $k$. Every anisotropic generalized quadratic space over $Q$ remains
  anisotropic after scalar extension to the function field $\FQ$ of
  the Severi--Brauer conic of $Q$, hence also after scalar extension to any odd degree extension of $k$. 
\end{thm}

To underline the contrast with our argument and the broader scope of the
proof in~\cite{PSS}, we sketch the two-step approach of the latter. First,
\cite[Cor.~2.2]{PSS} shows that the function field of a conic
satisfies the following ``excellence'' property: for every hermitian
or skew-hermitian form $h$ over a central simple algebra with
involution over a field $k$, the anisotropic kernel of the extension
$h_\FQ$ of $h$ to the function field $\FQ$ of a smooth conic over $k$
is extended from~$k$. Second, \cite[Prop.~3.3]{PSS} shows that if $Q$
is a quaternion division algebra and $\FQ$ is the function field of
its Severi--Brauer conic, then the
skew-hermitian forms $h$ over $Q$ such that $h_\FQ$ is hyperbolic are
already hyperbolic over $Q$. 
Consider an anisotropic skew-hermitian form $h$
over $Q$. 
The excellence property yields a
skew-hermitian form $h'$ over $Q$ whose scalar extension $h'_\FQ$ to
$\FQ$ is the anisotropic kernel of $h_\FQ$. Then $h\perp-h'$ is
hyperbolic after scalar extension to $\FQ$, hence also over
$Q$. Therefore, $h$ is isometric to $h'$ by Witt cancellation, and
$h_\FQ$ is anisotropic. Each of the steps has been 
established in characteristic~$2$: the excellence property of
function fields of smooth conics is established in~\cite{MT}, and 
the analogue of~\cite[Prop.~3.3]{PSS}
is proved by Becher--Dolphin in~\cite{BD} in 
the equivalent language of quadratic pairs.

Our approach produces a direct and characteristic--free proof of the theorem above, which does not use the excellence property, see~\S\ref{sec:proof}. 
We use a representation of $\FQ$ as the
subfield of a rational function field in one inderminate fixed under
an automorphism of order~$2$ and a degree map on a subring $\calQ$ of
the split algebra $Q_\FQ$ obtained from $Q$ by scalar extension from
$k$ to $\FQ$. A general method to attach an ordinary quadratic form
over a field to any generalized quadratic form on a split quaternion
algebra is given in~\S\,\ref{subsec:Morita}. It is used to reduce the
question of anisotropy of a generalized quadratic form on $Q_\FQ$
extended from $Q$ to the same question for an ordinary quadratic form
$q$ over $\FQ$. We use 
the degree map on $\calQ$ to define a degree for $q$-isotropic
vectors. A degree reduction argument then shows that the minimal
degree of an isotropic vector is~$1$, which means that isotropic
vectors of minimal degree come from isotropic vectors of the
generalized quadratic form over $Q$. This technique of proof is
inspired by Rost's  proof in~\cite{R} of the excellence property of
$\FQ$ for quadratic forms over $k$.

The first section is a survey of generalized quadratic forms focused
on the case of quaternion algebras. In its last subsection, it gives
a detailed description of the close
relationship between generalized quadratic forms and quadratic pairs,
to illuminate the equivalence between the viewpoint of Becher--Dolphin
in~\cite{BD} (or in~\cite{BoI}) and ours. As a result of this
equivalence, our main theorem can be rephrased as follows:

\begin{corol*} 
Let $A$ be a central simple algebra Brauer-equivalent to a quaternion algebra $Q$ over an arbitrary field $k$. Every anisotropic quadratic pair on $A$ remains anisotropic after scalar extension to the function field $F$ of the Severi-Brauer variety of $Q$, hence also after scalar extension to any odd degree field extension of $k$. 
\end{corol*}

Quadratic pairs are not used anywhere else in the paper.
The second section yields a description of
the function field $\FQ$ and of the degree map on~$\calQ$, and the
proof of the theorem is given in~\S\,\ref{sec:proof}.

\subsection*{Notation}
We refer to~\cite{EKM}, \cite{Knus} or \cite{BoI} for background on
central simple algebras with involution, quadratic forms,
Severi--Brauer varieties, and for any undefined terminology. 
We write
$\Trd_A$ for the reduced trace linear map from a central simple
algebra $A$ to its center. For any quaternion algebra $Q$, we write
$\invo$ for the canonical involution mapping $x\in Q$ to $\overline
x=\Trd_Q(x)-x$. 

\section{Generalized quadratic forms and quadratic pairs}
\label{sec:qf}

A general notion of quadratic form was introduced by Tits
in~\cite{Tits} in order to 
describe algebraic groups of type~$\mathsf{D}$ in arbitrary characteristic.
This notion was expanded in
several ways, notably by Wall~\cite{Wall} and Bak~\cite{Bak}. Thorough
expositions can be found in the monographs by Scharlau~\cite{Sch},
Knus~\cite{Knus} and Tits--Weiss~\cite{TW}. In the geometric tradition
of~\cite{TitsBN} and \cite{TW}, pseudo-quadratic spaces are defined
over division rings, 
which makes them 
ineffective for our purpose since the base ring may no
longer be a division ring after an extension of scalars. Therefore,
the generalized quadratic spaces studied in this work are modeled on
the ``unitary spaces'' of~\cite{Knus}. After reviewing their
definition in~\S\,\ref{subsec:def}, we describe in~\S\,\ref{subsec:Morita}
the Morita equivalence needed to relate generalized quadratic spaces
to ordinary quadratic forms over fields in the case where the base
quaternion algebra is split. All the material in~\S\,\ref{subsec:def}
and~\S\,\ref{subsec:Morita} can be found in a much more general form
in~\cite{Knus}, but the special case of quaternionic spaces affords
substantial simplifications because the base ring is a central simple
algebra equipped with a canonical involution. The
final~\S\,\ref{subsec:quadpair} outlines the correspondence between
generalized quadratic forms over quaternion algebras and quadratic
pairs on central simple algebras of index $2$. Quadratic pairs were
introduced in~\cite{BoI}; they 
provide alternative descriptions of linear algebraic groups of
type~$\mathsf{D}$ and are well-suited to scalar extension. The
description of the quadratic pair associated to a generalized
quadratic form uses some standard identification, recalled
in~\S\,\ref{subsec:standard}.  
\medbreak

Throughout this section, $Q$ is a quaternion algebra over an arbitrary
field $k$, with canonical involution $\invo$. For every right (resp.\
left) $Q$-module $V$, the left (resp.\ right) $Q$-module $\overline V$
is defined by $\overline V = \{\overline v\mid v\in V\}$ with the
$Q$-module structure given by
\[
  x\,\overline v = \overline{v\overline x} \qquad\text{(resp.\
    $\overline v\,x = 
    \overline{\overline x v}$)} \qquad\text{for $x\in Q$ and $v\in
    V$.} 
\]

\subsection{Definitions}
\label{subsec:def}

Let $V$ be a finitely generated (projective\footnote{Since $Q$ is a
  right artinian simple ring, every finitely generated right
  $Q$-module is projective: see~\cite[(2.8)]{Lam}.}) right $Q$-module
and let 
$V^*=\Hom_Q(V,Q)$, with its natural left $Q$-module structure.
A \emph{sesquilinear map} on $V$ is a $k$-bilinear map $s\colon
V\times V\to Q$ such that
\[
  s(v\,x, w\,y) = \overline x\, s(v,w)\,y
  \qquad\text{for $v$, $w\in V$ and $x$, $y\in Q$.}
\]
Sesquilinear maps on $V$ may also be viewed as $Q$-module
homomorphisms $V\to \overline{V^*}$ by attaching to $s$ the map
$\widehat s\colon V\to \overline{V^*}$ carrying $v\in V$ to
$\overline{s(v,\mbox{\textvisiblespace})}$. 
We write $\Sesq(V)$ for the $k$-vector space of sesquilinear maps on
$V$ and define an operator $\op$ on $\Sesq(V)$ by letting
\[
  s^\op(v,w) = \overline{s(w,v)}\qquad \text{for $s\in\Sesq(V)$ and
    $v$, $w\in V$}. 
\]
The $k$-vector spaces $\Herm^+(V)$ of \emph{even hermitian forms} and
$\Herm^-(V)$ of \emph{even skew-hermitian forms} on $V$ are defined by
\[
  \Herm^+(V)=\{s+s^\op\mid s\in\Sesq(V)\} \quad\text{and}\quad
  \Herm^-(V)=\{s-s^\op\mid s\in\Sesq(V)\}.
\]
Thus, $\Herm^+(V)=\Herm^-(V)$ if $\charac k=2$. \emph{Generalized
  quadratic forms} on $V$ are defined to be the elements of the
quotient space
\[
  \Quad(V)=\Sesq(V)/\Herm^+(V).
\]
For $s\in\Sesq(V)$, we write $[s]$ for the image of $s$ in $\Quad(V)$
and define $h_s\in\Herm^-(V)$ and $q_s\colon V\to Q/k$ by
\[
  h_s=s-s^\op \qquad\text{and}\qquad
  q_s(v) = s(v,v)+k \text{ for $v\in V$.}
\]
Note that $h_s$ and $q_s$ depend only on $[s]$, because $\op$ is an
involution on $\Sesq(V)$ and $(t+t^\op)(v,v) =
\Trd_Q\bigl(t(v,v)\bigr)\in 
k$ for $t\in\Sesq(V)$ and $v\in V$.

A generalized quadratic form $[s]$ on $V$ is said to be
\emph{nonsingular} if $\widehat h_s\colon V\to \overline{V^*}$ is an
isomorphism, 
and it is said to be \emph{isotropic} if there is a nonzero element
$v\in V$ such that $q_s(v)=0$. Every such element is termed
\emph{isotropic}.
\medbreak

\emph{Generalized quadratic spaces} over $Q$ are defined to be pairs
$(V,[s])$ 
consisting of a finitely generated right $Q$-module $V$ and a
nonsingular generalized quadratic form $[s]\in\Quad(V)$. The
generalized quadratic space $(V,[s])$ is said to be \emph{isotropic}
if $[s]$ is 
isotropic, and \emph{anisotropic} if $[s]$ is not isotropic.

If $\charac k\neq2$, then $[s]=[\frac12h_s]$ because
$s-\frac12h_s=\frac12(s+s^\op)$; moreover $\frac12h_s$ is the only form in
$\Herm^-(V)$ representing $[s]\in\Quad(V)$. Hence, over such fields, generalized
quadratic spaces 
$(V,[s])$ may be viewed as skew-hermitian spaces $(V,\frac12h_s)$.

In arbitrary characteristic, the maps $h_s$ and $q_s$ associated to a
generalized quadratic space $(V,[s])$ over $Q$ satisfy the following conditions:
\begin{equation}
  \label{eq:cond1}
  q_s(v+w)-q_s(v)-q_s(w) = h_s(v,w)+k\qquad\text{for $v$, $w\in V$}
\end{equation}
and
\begin{equation}
  \label{eq:cond2}
  q_s(v\,x) = \overline x\,q_s(v)\,x \qquad\text{for $v\in V$ and
    $x\in Q$.}
\end{equation}
When $Q$ is a division algebra and $V$ is a finite-dimensional right
$Q$-vector space, every map $q\colon V\to Q/k$
satisfying~\eqref{eq:cond1} and \eqref{eq:cond2} for some
$h\in\Herm^-(V)$ has the form $q=q_s$ for some uniquely 
determined generalized quadratic form $[s]\in\Quad(V)$, as is easily
seen by using a base of $V$ as in~\cite[\S\,8.2]{TitsBN}. Therefore,
generalized quadratic spaces over a quaternion division algebra can
be 
alternatively defined just in terms of maps $q$
satisfying~\eqref{eq:cond1} and \eqref{eq:cond2}, see
Seip-Hornix's paper~\cite{SH}, which predates~\cite{Tits}, or ~\cite[\S\,11]{TW}. 

\subsection{Morita equivalence}
\label{subsec:Morita}

In this subsection, we assume $Q$ is split and show how to associate
an ordinary quadratic form on a $k$-vector space to each generalized
quadratic form over $Q$, in such a way that isotropic quadratic forms
correspond to isotropic generalized quadratic forms. Our construction
is a special case of the Morita equivalence discussed
in~\cite[I(9.4)]{Knus}.

Let $Q=\End_k(I)$ for some $2$-dimensional $k$-vector space $I$, and
let $a\colon I\times I\to k$ be a nonzero alternating bilinear
form. The form $a$ is uniquely determined up to a factor in $k^\times$
since $\dim_kI=2$, and its adjoint involution on $Q$ is symplectic; it
is therefore the canonical involution $\invo$. Viewing $I$ as a left
$Q$-module we thus have
\[
  a(\xi,x\,\eta)=a(\overline x\,\xi,\eta) \qquad\text{for $\xi$, $\eta\in I$ and
    $x\in Q$.}
\]
For any finitely generated right $Q$-module $V$ and any
$s\in\Sesq(V)$, we define a $k$-bilinear map $a*s$ on $V\otimes_QI$
by 
\[
  (a*s)(v\otimes\xi, w\otimes\eta) = a(\xi, s(v,w)\eta)
  \qquad\text{for $v$, $w\in V$ and $\xi$, $\eta\in I$,}
\]
see~\cite[I.8]{Knus}. Using this, we get the following: 
\begin{prop}
\label{prop:nonsing}
(a) The map $q_{a*s}\colon V\otimes_QI\to k$, defined by 
\[
  q_{a*s}(v\otimes\xi) = a(\xi, s(v,v)\,\xi)
  \qquad\text{for $v\in V$ and $\xi\in I$}
\]
is a quadratic form on $V\otimes_QI$. 
It depends only on the choice of $a$ and the class $[s]$ of the sesquilinear form $s$. \\
(b) The quadratic form $q_{a*s}$ is nonsingular (resp.\ isotropic) if
  and only if the generalized quadratic form $[s]$ is nonsingular
  (resp.\ isotropic).
\end{prop}

\begin{proof}
(a) The map $q_{a*s}$ is obtained by restricting the $k$-bilinear map $a*s$ to diagonal elements. Therefore, it is a quadratic form, with associated symmetric bilinear form \[b_{a*s}\colon (V\otimes_QI) \times (V\otimes_QI)\to k\] defined by 
\begin{align*}
  b_{a*s}(v\otimes \xi, w\otimes\eta) &
        = (a*s)(v\otimes\xi, w\otimes\eta) + (a*s)(w\otimes\eta,
                                        v\otimes\xi)\\
                                      & = (a*h_s)(v\otimes\xi,w\otimes\eta)
                                        \quad\text{ for $v$, $w\in V$
                                        and $\xi$, $\eta\in I$.}
\end{align*}
Since $a$ is alternating, $q_{a*s}=0$ for all
$s\in\Herm^+(V)$, hence $q_{a*s}$ depends only on $a$ and
$[s]\in\Quad(V)$.

(b) The left $Q$-module structure on $I$ induces a right $Q$-module
  structure on its $k$-dual $I^\sharp = \Hom_k(I,k)$: for $\psi\in
  I^\sharp$, $x\in Q$ and $\xi\in I$,
  \[
    (\psi x)(\xi) = \psi(x\xi).
  \]
  Let also $(V\otimes_QI)^\sharp = \Hom_k(V\otimes_QI, k)$. As explained in~\cite[I(8.2.1)]{Knus}, 
a canonical isomorphism of $k$-vector spaces
  \begin{equation}
    \label{eq:isodual}
    I^\sharp\otimes_QV^* \xrightarrow{\sim} (V\otimes_QI)^\sharp
  \end{equation}
  identifies $\psi\otimes\varphi\in I^\sharp\otimes_QV^*$ with the
  linear functional
  \[
    \psi\otimes\varphi\colon v\otimes\xi \mapsto \psi(\varphi(v)\xi)
    \qquad\text{for $v\in V$ and $\xi\in I$.}
  \]
  Since $b_{a*s} = a*h_s$, we have, under this identification,
  \[
    b_{a*s}(v\otimes\xi,\mbox{\textvisiblespace}) = a(\xi,
    \mbox{\textvisiblespace}) \otimes h_s(v, \mbox{\textvisiblespace})
    \qquad\text{for $v\in V$ and $\xi\in I$.}
  \]
  The map $\xi\mapsto a(\xi,\mbox{\textvisiblespace})$ is an
  isomorphism of $k$-vector spaces $I\to I^\sharp$ because $a$ is
  nonzero, hence the map $v\otimes\xi\mapsto b_{a*s}(v\otimes\xi,
  \mbox{\textvisiblespace})$ is an isomorphism $V\otimes_QI \to
  (V\otimes_QI)^\sharp$ if and only if $\widehat h_s$ is an
  isomorphism. This shows that $q_{a*s}$ is nonsingular if and
  only if $[s]$ is nonsingular, and it only remains to prove the isotropy statement. 

  If $v\in V$ is such that $s(v,v)\in k$, then
  $q_{a*s}(v\otimes\xi)=0$ for all $\xi\in I$. Hence $q_{a*s}$ is
  isotropic if $[s]$ is isotropic. To prove the converse, first observe that if
  $\xi\in I$ is nonzero then $I=Q\xi$, hence every element in
  $V\otimes_QI$ has the form $v\otimes\xi$ for some $v\in V$. Suppose
  $v\otimes\xi$ is isotropic for $q_{a*s}$, and pick $\xi'\in I$ such
  that $\xi$, $\xi'$ is a $k$-base of $I$. Since $Q=\End_F(I)$, there
  exists $x\in Q$ such that $x\xi=x\xi'=\xi$. Then
  $v\otimes\xi=vx\otimes\xi$, hence $vx\neq0$. Since
  $a(\xi,s(v,v)\xi)=q_{a*s}(v\otimes\xi)=0$ it follows that for all
  $\lambda$, $\lambda'\in k$
  \[
    a\bigl(\xi\lambda+\xi'\lambda',
    s(vx,vx)(\xi\lambda+\xi'\lambda')\bigr) =
    a\bigl(x(\xi\lambda+\xi'\lambda'),\,
    s(v,v)x(\xi\lambda+\xi'\lambda')\bigr)  =
    a(\xi,s(v,v)\xi)(\lambda+\lambda')^2=0.
  \]
  Therefore, $s(vx,vx)\eta\in\eta k$ for all $\eta\in I$; this implies
  $s(vx,vx)\in k$, hence $vx$ is isotropic for $[s]$.
\end{proof}

\begin{remk}
Proposition~\ref{prop:nonsing}(b) also readily follows from the fact that
the choice of $a$ induces a Morita equivalence between the categories of generalized quadratic
spaces over $Q$ and of (ordinary) quadratic spaces over $k$ which carries
any generalized quadratic space $(V,[s])$ over $Q$ to the quadratic
space $(V\otimes_QI, q_{a*s})$ over $k$, see~\cite[I.9]{Knus}. We will
not need this more general fact.
\end{remk}

\subsection{Standard identifications}
\label{subsec:standard}
The description of quadratic pairs associated to generalized quadratic
forms uses some standard identifications, see~\cite[\S\,5.A]{BoI}. As
a preparation for the next section, we compare these identifications
for a generalized quadratic form and for the Morita equivalent
quadratic form defined in \S\,\ref{subsec:Morita}.  

Given a nonsingular hermitian form $h$ on the finitely generated right
$Q$-module $V$, the map 
$\overline v\to 
h(v,\mbox{\textvisiblespace})$ defines an isomorphism of left
$Q$-modules $\overline V\to V^*$, which is a twisted version of
$\widehat h$. It induces a $k$-vector space isomorphism
\begin{equation}
  \label{eq:Phi}
  \Phi_{h}\colon V\otimes_Q\overline V \xrightarrow{\sim}
  V\otimes_QV^* = \End_Q(V), \qquad
  v\otimes\overline w\mapsto v\,h(w,\mbox{\textvisiblespace})
  \quad\text{for $v$, $w\in V$.}
  \end{equation}
Similarly, given a nonsingular quadratic form $q\colon W\to k$ on a
finite dimensional $k$-vector space $W$ 
with polar bilinear form $b$, the map $w\mapsto
b(w,\mbox{\textvisiblespace})$ defines an isomorphism from $W$ to its
dual $W^\sharp=\Hom_k(W,k)$, hence also a $k$-vector space
isomorphism
\begin{equation}
\label{eq:Phisplit}
  \Phi_b\colon W\otimes_kW \xrightarrow{\sim} W\otimes_kW^\sharp =
  \End_k(W), \qquad
  v\otimes w \mapsto v\,b(w,\mbox{\textvisiblespace})
  \quad\text{for $v$, $w\in W$.}
\end{equation}

We assume $Q$ is split and use the same notation as
in~\S\,\ref{subsec:Morita}. In particular,  
$(V,[s])$ is a generalized quadratic space over $Q=\End_k(I)$, $a:\,I\times I\rightarrow k$ is a fixed nonzero alternating bilinear form, and we use it to associate to $[s]$ a quadratic form $q_{a* s}$ on $V\otimes_k I$. Applying~\eqref{eq:Phi} and~\eqref{eq:Phisplit} to $h_s$ and $b_{a*s}$, we get isomorphisms 
\[\Phi_{h_s}\colon V\otimes_Q\overline V\rightarrow \End_Q(V),\] 
and 
\[
  \Phi_{b_{a*s}}\colon (V\otimes _Q I)\otimes _k(V\otimes_Q
  I)\rightarrow \End_k(V\otimes_Q I).
\]
Moreover, since $Q=\End_k(I)$, there is a canonical isomorphism 
\begin{equation}
  \label{eq:Thetadef}
  \Theta\colon \End_Q(V)\xrightarrow{\sim} \End_k(V\otimes_QI)
\end{equation}
such that $
  \Theta(\rho)(v\otimes\xi) = \rho(v)\otimes\xi$ for
    $\rho\in\End_Q(V)$, $v\in V$ and $\xi\in I$.

Define a $k$-vector space isomorphism $\Theta'\colon
  (V\otimes_QI)\otimes_k(V\otimes_QI) \to V\otimes_Q\overline V$ by
  using the isomorphism $V\otimes_QI\to (V\otimes_QI)^\sharp$ mapping
  $v\otimes\xi$ to $b_{a*s}(v\otimes\xi,\mbox{\textvisiblespace})$
  together
  with the isomorphism $I^\sharp\otimes_QV^*\to(V\otimes_QI)^\sharp$
  of~\eqref{eq:isodual}, the identification $I\otimes_kI^\sharp =
  \End_k(I)=Q$ and the isomorphism $\overline V\to V^*$ mapping
  $\overline v$ to $h_s(v,\mbox{\textvisiblespace})$. More precisely, 
  $\Theta'$ is the following composition
  \[
    (V\otimes_QI)\otimes_k(V\otimes_QI) \to
    (V\otimes_QI)\otimes_k(V\otimes_QI)^\sharp \to
    V\otimes_QI\otimes_kI^\sharp\otimes_QV^* \to V\otimes_QV^* \to
    V\otimes_Q\overline V.
  \]
  Thus, for $v_1$, $v_2\in V$ and $\xi_1$, $\xi_2\in I$, we have 
  \[
    \Theta'\bigl((v_1\otimes\xi_1)\otimes(v_2\otimes\xi_2)\bigr) =
    \bigl(v_1\cdot(\xi_1\otimes
    a(\xi_2,\mbox{\textvisiblespace}))\bigr) 
    \otimes \overline v_2,
  \]
 where $\xi_1\otimes a(\xi_2,\mbox{\textvisiblespace})\in
  I\otimes_kI^\sharp = Q$. 
  
  \begin{lemma}
  \label{lem:diag}
  The following square of isomorphisms is commutative: 
   \[
      \xymatrix{
        (V\otimes_QI)\otimes_k(V\otimes_QI) \ar[rr]^{\Phi_{b_{a*s}}}
        \ar[d]_{\Theta'} && \End_k(V\otimes_QI)\\
        V\otimes_Q\overline V \ar[rr]^{\Phi_{h_s}} && \End_Q(V)
        \ar[u]_{\Theta}
      }\]
      Moreover, for all $v$, $w\in V$, we have 
\[  \Trd_{\End_Q(V)}\bigl(\Phi_{h_s}(v\otimes \overline w)\bigr) =
  \Trd_Q\bigl(h_s(w,v)\bigr). \]
  \end{lemma} 
  \begin{remk}
  The second assertion also holds when $Q$ is a division algebra
  by~\cite[(5.1)]{BoI}.  
  \end{remk} 
  
  \begin{proof}
  For $v_1$, $v_2\in V$ and $\xi_1$, $\xi_2\in I$, the endomorphism 
  \[(\Theta\circ\Phi_{h_s}\circ
  \Theta')\bigl((v_1\otimes\xi_1)\otimes(v_2\otimes\xi_2)\bigr)\] maps
  $w\otimes\eta\in V\otimes_QI$ to
  \[
    \bigl(v_1\cdot(\xi_1\otimes
    a(\xi_2,\mbox{\textvisiblespace}))h_s(v_2,w)\bigr) \otimes\eta =
    v_1\otimes\bigl((\xi_1\otimes a(\xi_2,\mbox{\textvisiblespace}))
    h_s(v_2,w)\cdot\eta\bigr).
  \]
  Now, $(\xi_1\otimes a(\xi_2,\mbox{\textvisiblespace}))
    h_s(v_2,w)\cdot\eta=\xi_1\otimes a(\xi_2,h_s(v_2,w)\eta) =
    \xi_1\otimes 
    (a*h_s)(v_2\otimes\xi_2,w\otimes\eta)$. Since $b_{a*s} = a*h_s$,
    it follows that
    \[
      \bigl(v_1\cdot(\xi_1\otimes
      a(\xi_2,\mbox{\textvisiblespace}))h_s(v_2,w)\bigr) \otimes\eta =
      (v_1\otimes\xi_1)\otimes b_{a*s}(v_2\otimes\xi_2,w\otimes\eta).\]
      This proves the first assertion of the lemma. 
     
         Since the reduced trace is linear and $\Theta'$ is an isomorphism, it is enough to prove the second assertion for the image of a symbol $(v_1\otimes \xi_1)\otimes (v_2\otimes \xi_2)$. So, 
        we let 
  $v=v_1\cdot\bigl(\xi_1\otimes
    a(\xi_2,\mbox{\textvisiblespace})\bigr)$ and $w=v_2,$ so that $v\otimes\overline w=\Theta'\bigl((v_1\otimes \xi_1)\otimes(v_2\otimes \xi_2)\bigr).$ By uniqueness of the reduced trace, we have $\Trd_{\End_Q(V)}=\Trd_{\End_k(V\otimes_QI)}\circ \Theta$. Therefore, the commutative diagram above shows that 
    \[\Trd_{\End_Q(V)}(\Phi_{h_s}(v\otimes \overline w))=\Trd_{\End_k(V\otimes_Q I)}\bigl(\Phi_{b_{a*s}}((v_1\otimes\xi_1)\otimes (v_2\otimes \xi_2))\bigr).\] 
By~\cite[(5.1)]{BoI}, this reduced trace is equal to $b_{a*s}(v_2\otimes \xi_2,v_1\otimes \xi_1)=a(\xi_2,h_s(v_2,v_1)\xi_1)$. 
On the other hand, we have 
\[h_s(w,v)=h_s(v_2,v_1\cdot \bigl(\xi_1\otimes
    a(\xi_2,\mbox{\textvisiblespace})\bigr)=h_s(v_2,v_1)\bigl(\xi_1\otimes
    a(\xi_2,\mbox{\textvisiblespace})\bigr)=\bigl(h_s(v_2,v_1)\cdot\xi_1\bigl)\otimes
    a(\xi_2,\mbox{\textvisiblespace}).
\]
The lemma follows, since under the identifications $I\otimes_kI^\sharp
=\End_k(I)=Q$,
\begin{equation}
  \label{eq:Trdsplit}
  \Trd_Q(\xi\otimes\psi) = \psi(\xi) \qquad\text{for $\xi\in I$ and
    $\psi\in I^\sharp$.}
\end{equation}
\qedhere
\end{proof}

\subsection{Quadratic pairs}
\label{subsec:quadpair}

Recall from~\cite[\S\,5.B]{BoI} that a quadratic pair on a central
simple algebra $A$ of even degree~$n$ over an arbitrary field $k$ is a
pair $(\sigma,f)$ consisting of a $k$-linear involution $\sigma$ and a
linear map $f\colon\Sym(\sigma)\to k$, where $\Sym(\sigma) = \{x\in
A\mid \sigma(x)=x\}$, subject to the following conditions:
\begin{enumerate}
\item[(i)]
  $\dim_k\Sym(\sigma)=\frac12n(n+1)$ and $\Trd_A(x)=0$ for all $x\in
  A$ such that $\sigma(x)=-x$;
\item[(ii)]
  $f\bigl(x+\sigma(x)\bigr) = \Trd_A(x)$ for all $x\in A$.
\end{enumerate}
The map $f$ is called the \emph{semitrace} of the quadratic pair
$(\sigma,f)$, because~(ii) implies $f(x)=\frac12\Trd_A(x)$ for all
$x\in\Sym(\sigma)$ if $\charac k\neq2$.

When $A=\End_k(W)$ for some $k$-vector space of even dimension, it is
shown in~\cite[(5.11)]{BoI} how to attach a quadratic pair
$(\sigma_b,f_q)$ to each nonsingular quadratic form $q\colon W\to k$
with polar bilinear form $b$ using the isomorphism~\eqref{eq:Phisplit}. 
The involution $\sigma_b$ is adjoint to $b$, and is given by the
switch map on $W\otimes_kW$:
\[
  \sigma_b\circ\Phi_b(v\otimes w) = \Phi_b(w\otimes v)
  \qquad\text{for $v$, $w\in W$.}
\]
Therefore, $\Sym(\sigma)$ is spanned by elements of the form
$\Phi_b(w\otimes w)$ with $w\in W$.
The semitrace $f_q$ is the unique linear map such that
$f_q\bigl(\Phi_b(w\otimes w)\bigr) = q(w)$ for all $w\in W$. Moreover,
every quadratic pair on $A$ is 
attached to a nonsingular quadratic form on $W$, uniquely determined
up to a factor in $k^\times$.
\medbreak

Our goal in this subsection is to show how to associate a quadratic
pair on $\End_Q(V)$ to every generalized quadratic space $(V,[s])$ on
$Q$. We also show that when $Q=\End_k(I)$ the quadratic
pair associated to $[s]$ coincides with the quadratic pair attached to
$(V\otimes_QI,q_{a*s})$ under the canonical isomorphism
$\Theta\colon\End_Q(V)\xrightarrow{\sim}\End_k(V\otimes_QI)$
of~\eqref{eq:Thetadef}. Thus, the Morita equivalence discussed 
in~\S\,\ref{subsec:Morita} has no effect on quadratic pairs.
\medbreak

Let $(V,[s])$ be a generalized quadratic space over $Q$ and let $\Phi_{h_s}$ be the isomorphism defined in~\eqref{eq:Phi}. 
 Write $\sw$ for the switch map on $V\otimes_Q\overline V$, which
carries $v\otimes\overline w$ to $w\otimes\overline{v}$ for $v$, $w\in
V$. The involution $\sigma_{h_s}$ adjoint to $h_s$ satisfies
$\sigma_{h_s}\circ\Phi_{h_s} = -\Phi_{h_s}\circ\sw$ because $h_s$ is
skew-hermitian, hence the space $\Skew(\sw)=\{u\in V\otimes_Q\overline
V\mid \sw(u)=-u\}$ is mapped bijectively to $\Sym(\sigma_{h_s})$ by
$\Phi_{h_s}$:
\[
  \Sym(\sigma_{h_s}) = \Phi_{h_s}\bigl(\Skew(\sw)\bigr).
\]
For every $t\in\Sesq(V)$, define a $k$-linear functional
\[
  T_t\colon V\otimes_Q\overline V \to k
  \qquad\text{by}\quad
  T_t(v\otimes\overline w) = \Trd_Q\bigl(t(w,v)\bigr)
  \quad\text{for $v$, $w\in V$.}
\]
Then $T_{t^\op} = T_t\circ\sw$ for all $t\in\Sesq(V)$, hence
$T_{t+t^\op}$ 
vanishes on $\Skew(\sw)$ and the restriction of $T_s$ to $\Skew(\sw)$
depends only on $[s]\in\Quad(V)$. Let $f_{[s]}\colon
\Sym(\sigma_{h_s})\to k$ be defined by
\[
  f_{[s]}\bigl(\Phi_{h_s}(u)\bigr) = T_s(u) \qquad\text{for $u\in
    \Skew(\sw)$.}
\]

\begin{prop}
  \label{prop:quadpair}
  The pair $(\sigma_{h_s},f_{[s]})$ is a quadratic pair on $\End_Q(V)$
  attached to $[s]\in\Quad(V)$. Every quadratic pair on $\End_Q(V)$ is
  attached to a nonsingular generalized quadratic form on $V$, which
  is uniquely determined up to a factor in $k^\times$.
\end{prop}

\begin{proof}
  Since $h_s$ is an even skew-hermitian form on $V$, the adjoint
  involution $\sigma_{h_s}$ satisfies condition~(i) in the definition
  of a quadratic pair by~\cite[(2.6)\,\&\,(4.2)]{BoI}. To check condition~(ii),
  observe that for $v$, $w\in V$ we have
\begin{multline*}
  f_{[s]}\bigl(\Phi_{h_s}(v\otimes\overline w) +
  \sigma_{h_s}(\Phi_{h_s}(v\otimes\overline w))\bigr) =
  T_s(v\otimes\overline w) - T_s(w\otimes\overline v)
  \\
  =\Trd_Q\bigl(s(w,v)\bigr) - \Trd_Q\bigl(s(v,w)\bigr) =
  \Trd_Q\bigl(h_s(w,v)\bigr). 
\end{multline*}
Using \cite[(5.1)]{BoI} and Lemma~\ref{lem:diag}, we see that for $v$, $w\in V$
\[
  f_{[s]}\bigl(\Phi_{h_s}(v\otimes\overline w) +
  \sigma_{h_s}(\Phi_{h_s}(v\otimes\overline w))\bigr) =
  \Trd_{\End_Q(V)}\bigl(\Phi_{h_s}(v\otimes\overline w)\bigr),
\]
hence $(\sigma_{h_s},f_{[s]})$ is a quadratic pair on $\End_Q(V)$.
\medbreak

Now, let $(\sigma,f)$ be a quadratic pair on
$\End_Q(V)$. By~\cite[(4.2)]{BoI}, the involution $\sigma$ is adjoint
to some nonsingular skew-hermitian form $h$ on $V$. 
To prove the result, we need to construct a sesquilinear form $s$ on $V$ such that $f=f_{[s]}$ and $h=h_s$.
Consider the
$k$-vector space isomorphism $\Phi_h\colon V\otimes_Q\overline V \to
\End_Q(V)$ defined as in~\eqref{eq:Phi}, and recall
from~\cite[(5.7)]{BoI} that there exists $\ell\in\End_Q(V)$ such that
\begin{equation}
  \label{eq:fu}
  f\bigl(\Phi_h(u)\bigr) = \Trd_{\End_Q(V)}\bigl(\ell\,\Phi_h(u)\bigr)
  \qquad\text{for all $u\in\Skew(\sw)$.}
\end{equation}
To every $v$, $w\in V$ we attach the $k$-linear functional
$\tau_{v,w}\colon Q\to k$ defined by
\[
  \tau_{v,w}(x) = \Trd_{\End_Q(V)}\bigl(\ell\,\Phi_{h}(v\,x\otimes
  \overline w)\bigr).
\]
Since the bilinear form $(X,Y)\mapsto\Trd_Q(XY)$ on $Q$ is
nonsingular, there exists a unique element $s(w,v)\in Q$ such that
\[
  \tau_{v,w}(x)=\Trd_Q(s(w,v)x)\qquad\text{for all $x\in Q$.}
\]
Consider the map $s\colon V\times V\to Q$ that carries $(w,v)\in
V\times V$ to $s(w,v)$.
It is readily verified that $s\in\Sesq(V)$, and by definition
\[
  \Trd_{\End_Q(V)}\bigl(\ell\,\Phi_h(v\otimes\overline w)\bigr) =
  \Trd_Q\bigl(s(w,v)\bigr)=T_s(v\otimes\overline w) \qquad\text{for
    $v$, $w\in V$,}
\]
hence $\Trd_{\End_Q(V)}\bigl(\ell\,\Phi_h(u)\bigr) = T_s(u)$ for all
$u\in V\otimes_Q\overline V$. 
Comparing with~\eqref{eq:fu}, we see
that $f\circ\Phi_h$ coincides with $T_s$ on $\Skew(\sw)$, and it only remains to prove that $h=h_s$.
Using condition~(ii) in the definition of a quadratic pair, together with ~\cite[(5.1)]{BoI} and Lemma~\ref{lem:diag}, we obtain for $v$, $w\in V$
\[
  f\bigl(\Phi_h(v\otimes\overline w) + \sigma(\Phi_h(v\otimes
  \overline w))\bigr) = \Trd_{\End_Q(V)}\bigl(\Phi_h(v\otimes
  \overline w)\bigr) = \Trd_Q\bigl(h(w,v)\bigr).
\]
But $\sigma\bigl(\Phi_h(v\otimes\overline w)\bigr) =
-\Phi_h(w\otimes\overline v)$ and $f\circ\Phi_h = T_s$ on
$\Skew(\sw)$, hence we also have for $v$, $w\in V$
\[
  f\bigl(\Phi_h(v\otimes\overline w) + \sigma(\Phi_h(v\otimes
  \overline w))\bigr) = T_s(v\otimes\overline w) -
  T_s(w\otimes\overline v) = \Trd_Q\bigl(h_s(w,v)\bigr).
\]
Therefore, $\Trd_Q\bigl(h(w,v)\bigr) = \Trd_Q\bigl(h_s(w,v)\bigr)$ for
all $v$, $w\in V$ and also, since $h$ and $h_s$ are sesquilinear,
$\Trd_Q\bigl(h(w,v)x\bigr) = \Trd_Q\bigl(h_s(w,v)x\bigr)$ for all
$x\in Q$. It follows that $h(w,v)=h_s(w,v)$ for $v$, $w\in V$ because
the bilinear form $(X,Y)\mapsto \Trd_Q(XY)$ on $Q$ is
nonsingular. Therefore, $(\sigma,f)=(\sigma_{h_s},f_{[s]})$.

It remains to prove $[s]$ is unique up to a scalar factor. Consider
another sesquilinear form $s'\in \Sesq(V)$ and assume it gives rise to
the same quadratic pair. In particular, the skew-hermitian forms
$h_s=s-s^\op$ and $h_{s'}=s'-s'^\op$ induce the same adjoint
involutions. Hence they are equal up to a scalar factor. Scaling $s'$
if necessary, we may thus assume $h_s=h_{s'}$. It follows that $s$ and
$s'$ differ by a hermitian form $g$, and it only remains to prove that
$g$ is actually even-hermitian. By definition of the corresponding
semitraces, $T_s$ and $T_{s'}$ coincide on $\Skew(\sw)$, therefore
$T_g$ vanishes on this subset. In particular, for all $v\in V$ and
$x\in \Skew(Q,\invo)$, we have \[T_g(vx\otimes\overline
  v)=\Trd_Q(g(v,v)x)=0.\] By~\cite[(2.3)]{BoI}, we get that $g(v,v)$
is a symmetrized element of $(Q,\invo)$ for all $v\in V$ and the
result follows by~\cite[I(3.1.1)]{Knus}.  
\end{proof}

Recall from~\cite[(6.5)]{BoI} that a quadratic pair $(\sigma,f)$ on a
central simple $k$-algebra $A$ is said to be \emph{isotropic} if there
exists a nonzero right ideal $I\subset A$ subject to the following
conditions:
\begin{enumerate}
\item[(i)]
  $\sigma(x)y=0$ for all $x$, $y\in I$;
\item[(ii)]
  $f(x)=0$ for all $x\in I\cap\Sym(\sigma)$.
\end{enumerate}
It is shown in~\cite[(6.6)]{BoI} that the quadratic pair
$(\sigma_b,f_q)$ on $\End_k(W)$ attached to a nonsingular quadratic
form $q$ on a $k$-vector space $W$ is isotropic if and only if $q$ is
isotropic. We prove the same property for generalized quadratic
forms.

\begin{prop}
  \label{prop:isot}
  Let $(V,[s])$ be a generalized quadratic space over $Q$ and
  $(\sigma_{h_s},f_{[s]})$ the attached quadratic pair on
  $\End_Q(V)$. The form $[s]$ is isotropic if and only if the pair
  $(\sigma_{h_s}, f_{[s]})$ is isotropic.
\end{prop}

\begin{proof}
  Every right ideal in $\End_Q(V)$ has the form $\Hom_Q(V,U)$ for some
  $Q$-submodule $U\subset V$. Using the isomorphism $\Phi_{h_s}$
  of~\eqref{eq:Phi}, we may write $\Hom_Q(V,U) = \Phi_{h_s}(U\otimes_Q
  \overline V)$. Suppose $\Hom_Q(V,U)$ is a nonzero ideal
  satisfying~(i) and (ii), and let $u\in U$ be nonzero. For all $x\in
  Q$ such that $\overline x=-x$, we have $\Phi_{h_s}(u\,x\otimes
  \overline u) \in \Hom_Q(V,U)\cap \Sym(\sigma_{h_s})$ and
  \begin{equation}
    \label{eq:Trsu}
    \Trd_Q(s(u,u)x) = f_{[s]}\bigl(\Phi_{h_s}(u\,x\otimes \overline
    u)\bigr) = 0.
  \end{equation}
  Therefore, $s(u,u)\in k$ and $u$ is isotropic in $(V,[s])$.

  Conversely, let $u\in V$ be isotropic, and consider the nonzero
  right ideal \[
    \Hom_Q(V,uQ)=\Phi_{h_s}(uQ\otimes_Q\overline V).
  \] 
  It satisfies~(i), because
  for all $v$, $w\in V$
  \[
    \sigma_{h_s}\bigl(\Phi_{h_s}(u\otimes \overline v)\bigr)\circ
    \Phi_{h_s}(u\otimes \overline w) = -\Phi_{h_s}(v\otimes \overline
    u) \circ 
    \Phi_{h_s}(u\otimes\overline w) = - \Phi_{h_s}(v\,h_s(u,u) \otimes
    \overline w)
  \]
  and $h_s(u,u) = s(u,u)-\overline{s(u,u)} = 0$.

  To prove (ii), we need to show that $T_s$ vanishes on
  $(uQ\otimes_Q\overline V)\cap \Skew(\sw)$ since $f_{[s]}\circ
  \Phi_{h_s} = T_s$ by definition of $f_{[s]}$.
  We first observe that  $(uQ\otimes_Q\overline V)\cap\Skew(\sw)$ has
  a nice description.   
  Since $\widehat h_s$ is an isomorphism of right $Q$-modules, it
follows that for all $x\in Q$ the equations $ux=0$ and
$\widehat h_s(u)x=0$ are equivalent. Therefore, the right ideal
$A(u)=\{x\in Q\mid ux=0\}$ can be alternatively described as
\[
  A(u)=\{x\in Q\mid \overline x h_s(u,w)=0 \text{ for all $w\in V$}\},
\]
which means that the left ideal $\overline{A(u)}$ image of $A(u)$
under the canonical map $\invo$ is the annihilator of the right ideal
$h_s(u,V)\subset Q$:
\[
  \overline{A(u)} = \{y\in Q\mid yh_s(u,V)=0\}.
\]
By~\cite[(1.14)]{BoI}, it follows that $h_s(u,V)$ is the annihilator
of the left ideal $\overline{A(u)}$:
\[
  h_s(u,V) = \{x\in Q\mid yx=0 \text{ for all $y\in
    \overline{A(u)}$}\}.
\]
If $e\in Q$ is an idempotent such that $A(u)=eQ$, then $y(1-\overline
e)=0$ for all $y\in\overline{A(u)}$, hence there exists $w\in V$ such
that $h_s(u,w)=1-\overline e$.

Now, let $v\in V$ be such that $u\otimes\overline v=-v\otimes
\overline u$, so that $u\otimes \overline v\in (uQ\otimes_Q\overline V)\cap\Skew(\sw)$. 
Using $\Phi_{h_s}$, we see that this equation implies
that $uh_s(v,w) = -vh_s(u,w)$, hence $v(1-\overline e) =
-uh_s(v,w)$. But since $ue=0$,
\[
  v(1-\overline e)\otimes\overline u = v\otimes \overline u- v\otimes
  \overline{ue} = v\otimes\overline u.
\]
Therefore, $v\otimes \overline u=-u h_s(v,w)\otimes\overline u$. We
have thus proved
\[
  (uQ\otimes_Q \overline V)\cap \Skew(\sw) \subset \{ux\otimes
  \overline u\mid x\in Q\}.
\]
Note that $u=u(1-e)$ since $ue=0$, hence
\[
  \{ux\otimes \overline u\mid x\in
  Q\} = \{u(1-e)x(1-\overline e)\otimes \overline u \mid x\in Q\}
  = \{ux\otimes \overline u\mid x\in (1-e)Q(1-\overline e)\}.
\]
Moreover, $ux\otimes \overline u = 0$ implies $uxh_s(u,w)=0$, i.e.,
$ux(1-\overline e)=0$. It follows that $ux=0$ if $x\in
(1-e)Q(1-\overline e)$. But then $x\in A(u)=eQ$, hence $x=0$ if $x\in
(1-e)Q(1-\overline e)$. Thus, every element in $\{ux\otimes\overline
e\mid x\in Q\}$ has a unique expression of the form $ux\otimes \overline
u$ with $x\in (1-e)Q(1-\overline e)$. Therefore,
\[
  (uQ\otimes_Q\overline V)\cap \Skew(\sw) = \{ux\otimes \overline
  u\mid x\in 
  (1-e)Q(1-\overline e) \text{ and }\overline x=-x\}.
\]
Since $u$ is isotropic, we have $s(u,u)\in k$, hence for all $x\in Q$
such that $\overline x = -x$,
\[
  T_s(ux\otimes \overline u) = \Trd_Q\bigl(s(u,ux)) = s(u,u)
  \Trd_Q(x) =0.
\]
Therefore, $T_s$ vanishes on $(uQ\otimes_Q\overline V)\cap\Skew(\sw)$,
which means that $\Hom_Q(V,uQ)$ satisfies condition~(ii) and completes
the proof that it is an isotropic ideal, as required.
\end{proof}

For the rest of this section, we assume that $Q$ is split, and use the
same notation as in \S\,\ref{subsec:Morita} and
\S\,\ref{subsec:standard}.  
In particular, $\Theta$ is the isomorphism
$\End_Q(V)\xrightarrow{\sim} \End_k(V\otimes_QI)$
of~\eqref{eq:Thetadef} and $q_{a*s}$  is the ordinary quadratic form
on $V\otimes_QI$ associated to $[s]$.

\begin{prop}
  \label{prop:quadpairs}
  The canonical isomorphism $\Theta$ carries the quadratic pair
  $(\sigma_{h_s}, f_{[s]})$  attached to $[s]$ on $\End_Q(V)$ to the
  quadratic pair $(\sigma_{b_{a*s}},f_{q_{a*s}})$ attached to
  $q_{a*s}$ on $\End_k(V\otimes_QI)$.
\end{prop}

\begin{proof}
  The adjoint involution $\sigma_{h_s}$ is defined by the equation
  \[
    h_s(\rho(v),w) = h_s\bigl(v,\sigma_{h_s}(\rho)(w)\bigr)
    \qquad\text{for $\rho\in\End_Q(V)$ and $v$, $w\in V$.}
  \]
  Since $b_{a*s}=a*h_s$, this equation implies that for
  $\rho\in\End_Q(V)$, $v$, $w\in V$ and $\xi$, $\eta\in I$,
  \[
    b_{a*s}(\rho(v)\otimes\xi, w\otimes\eta) =
    a(\xi,\,h_s(\rho(v),w)\eta) = a(\xi,\,
    h_s(v,\sigma_{h_s}(\rho)(w))\eta) = b_{a*s}(v\otimes\xi,
    \sigma_{h_s}(\rho)(w)\otimes\eta).
  \]
  Therefore, $\sigma_{b_{a*s}}\circ\Theta = \Theta\circ\sigma_{h_s}$,
  hence $\Theta$ maps $\Sym(\sigma_{h_s})$ to
  $\Sym(\sigma_{b_{a*s}})$. It remains to show that
  $f_{q_{a*s}}\bigl(\Theta(\rho)\bigr) = f_{[s]}(\rho)$ for $\rho\in
  \Sym(\sigma_{h_s})$.
  
  Recall that $\Sym(\sigma_{b_{a*s}})$ is spanned by the elements
  $\Phi_{b_{a*s}}\bigl((v\otimes \xi)\otimes (v\otimes \xi)\bigr)$,
  with $v\in V$ and $\xi\in I$. Moreover,  
  $f_{q_{a*s}}$ maps such an element to $q_{a*s}(v\otimes
  \xi)=a(\xi,s(v,v)\xi)$.  
Therefore, by the commutative diagram given in Lemma~\ref{lem:diag},
we need to prove that for $v\in V$ and $\xi\in I$
    \[
       f_{[s]}\bigl((\Phi_{h_s}\circ\Theta')(v\otimes\xi,
      v\otimes\xi)\bigr)=a(\xi,s(v,v)\xi). 
    \]
  Since $\Theta'(v\otimes\xi, v\otimes\xi) =
     v\cdot\bigl(\xi\otimes a(\xi,
     \mbox{\textvisiblespace})\bigr)\otimes 
     \overline v$, letting $x= \xi\otimes a(\xi,
     \mbox{\textvisiblespace})\in Q$, we have by definition of
     $f_{[s]}$ 
     \[
       f_{[s]}\bigl((\Phi_{h_s}\circ\Theta')(v\otimes\xi,
      v\otimes\xi)\bigr) = T_s(v\,x\otimes\overline v) =
      \Trd_Q\bigl(s(v,v\,x)\bigr).
    \]
   Since $s(v,v\,x) = s(v,v)x=(s(v,v)\xi)\otimes a(\xi,
    \mbox{\textvisiblespace})$, the equation
    $\Trd_Q\bigl(s(v,v\,x)\bigr) = a(\xi, s(v,v)\xi)$ follows
    from~\eqref{eq:Trdsplit}. 
  \end{proof}

\begin{remk}
  Proposition~\ref{prop:quadpairs} yields another proof that $[s]$ is
  isotropic if and only if $q_{a*s}$ is isotropic, because $q_{a*s}$
  is isotropic if and only if $(\sigma_{b_{a*s}},f_{q_{a*s}})$ is
  isotropic, and Proposition~\ref{prop:isot} shows that $[s]$ is
  isotropic if and only if $(\sigma_{h_s}, f_{[s]})$ is isotropic.  
\end{remk}
\section{Generic splitting of quaternion algebras and the degree map}
\label{sec:gen}

Throughout this section, $Q$ is a quaternion division algebra
over an arbitrary field $k$. We fix a separable quadratic extension
$K$ of $k$ in $Q$ and write $Q$ as a crossed product
\[
  Q= (K,\iota,b) = K\oplus Kj
\]
where $\iota$ is the nontrivial $k$-automorphism of $K$ and $j\in Q$
satisfies
\[
  j^2=b\in k^\times \qquad\text{and}\qquad jx=\iota(x)j \quad\text{for
    $x\in K$.}
\] 

Let $t$ be an indeterminate over $K$. Extend $\iota$ to an
automorphism of the rational function field $K(t)$ by setting
\[
  \iota(t) = bt^{-1}.
\]
The function field $\FQ$ of the Severi--Brauer conic of $Q$ is a
generic splitting field of $Q$; recall from \cite[Remark~A.8]{MT} that
it can be viewed as the subfield of $K(t)$ fixed under $\iota$:
\[
  \FQ=K(t)^\iota.
\]
Every $k$-base $(1,\ell)$ of $K$ is also an $\FQ$-base of $K(t)$,
since every $f\in K(t)$ can be written as
\begin{equation}
  \label{eq:base}\
  f=f_0+\ell f_1 \quad\text{with}\quad
  f_0=\frac{\iota(\ell)f-\ell\iota(f)}{\iota(\ell)-\ell}\in \FQ
  \quad\text{and}\quad
  f_1=\frac{\iota(f)-f}{\iota(\ell)-\ell}\in\FQ.
\end{equation}
Therefore, $K\otimes_k\FQ=K(t)$, and the inclusion $\FQ\subset K(t)$
yields an identification
\[
  Q\otimes_k\FQ = K(t)\oplus K(t)j = (K(t),\iota, b).
\]
We write $Q_\FQ$ for the $\FQ$-algebra $Q\otimes_k\FQ$. 
It is split, as expected, since $j^2=b=\iota(t)t$. Note that $t$ does not belong to $F$, hence it is not a central element in $Q_F$. 
Consider
\[
  \varepsilon = 1+jt^{-1}=1+b^{-1}tj\in Q_{\FQ}
  \qquad\text{and}\qquad
  I=Q_{\FQ}\cdot\varepsilon\subset Q_{\FQ}.
\]
The following equations will be used repeatedly:
\begin{equation}
  \label{eq:fond}
  j\varepsilon = bt^{-1}+j = bt^{-1}\varepsilon=\iota(t)\varepsilon.
\end{equation}

\begin{prop}
  \label{prop:I}
  The left ideal $I$ in $Q_\FQ$ satisfies $I=K(t)\varepsilon=
  K\otimes_k\FQ\varepsilon$, and the expression of an element in $I$
  as $f\varepsilon$ with $f\in K(t)$ is unique. Moreover, identifying
  each $x\in Q_\FQ$ with multiplication on the left by $x$ yields
  an identification $Q_\FQ=\End_\FQ(I)$. 
\end{prop}

\begin{proof}
  The equations~\eqref{eq:fond} show that $K(t)j\varepsilon =
  K(t)\varepsilon$, hence 
  $I=K(t)\varepsilon$. Moreover, $K(t)=K\otimes_k\FQ$, hence
  $K(t)\varepsilon= K\otimes_k\FQ\varepsilon$. Multiplication by
  $\varepsilon$ is an injective map $K(t)\to I$
  since $K(t)$ is a field, hence every element in $I$ has a unique
  expression as $f\varepsilon$ with $f\in K(t)$, and it follows that
  $\dim_\FQ I=2$. The left $Q_\FQ$-module structure on $I$ yields a
  map $Q_\FQ\to\End_\FQ(I)$, which is injective because $Q_\FQ$ is a
  simple ring, hence surjective by dimension count.
\end{proof}

In preparation for the Morita equivalence between generalized
quadratic forms over $Q_F$ and ordinary quadratic forms over $F$ in
the next section, we now define a nonzero alternating form on $I$. 
We use the unique expression of elements of $I$ as $f\varepsilon$ with
$f\in K(t)$, and the choice of an element $u\in K^\times$ such that
$\iota(u)=-u$. Note that $u$ is
uniquely determined up to a factor in $k^\times$. If $\charac k=2$ we may take $u=1$.

\begin{prop}
  \label{prop:defa}
  The map $a\colon I\times I\to \FQ$ defined by
  \[
    a(f\varepsilon, g\varepsilon) = u(\iota(f)g-f\iota(g))
    \qquad\text{for $f$, $g\in K(t)$}
  \]
  is a nonzero alternating bilinear form. For $x\in Q$ and $\xi\in I$,
  the equation 
  $a(\xi, x\xi)=0$ holds if and only if $x\in k$ or $\xi=0$.
\end{prop}

\begin{proof}
  It is clear from the definition that $a$ is an alternating bilinear
  form. If $(1,\ell)$ is a $k$-base of $K$, then $a(\varepsilon,
  \ell\varepsilon) = u(\ell-\iota(\ell))\neq0$, hence $a$ is
  nonzero.

  If $\xi=0$ or $x\in k$, then $a(\xi,x\xi)=0$ because $a$ is
  $k$-linear and alternating.
  Now, suppose $\xi\in I$ is nonzero, and write $\xi=f\varepsilon$
  with $f\in K(t)^\times$. For $x=x_0+x_1j$, with $x_0,x_1\in K$,  
  using~\eqref{eq:fond} we obtain $x\xi =
  (x_0f+x_1\iota(ft))\varepsilon$, and the definition of $a$
  yields 
  \begin{align*}
    a(\xi, x\xi) &=
    u\bigl(\iota(f)(x_0f+x_1\iota(ft)) - 
    f\iota(x_0f+x_1\iota(ft))\bigr)
    \\ &=
    uf\iota(f)(x_0-\iota(x_0)
    +x_1\iota(ft)f^{-1}-\iota(x_1)ft\iota(f)^{-1}). 
  \end{align*}
  Therefore, letting $g=\iota(ft)f^{-1}\in K(t)^\times$, the equation
  $a(\xi, x\xi) =0$ implies 
  \[
    \bigl(x_0-\iota(x_0)\bigr) +x_1g-\iota(x_1g)=0.
  \]
  Note that $g\iota(g)=b$, hence $g\notin K$ because $Q$ is a division
  algebra. It follows that there exists a valuation $v$ on $K(t)$,
  trivial on $K$, such that $v(g)>0$. Then $v\bigl(\iota(g)\bigr)<0$
  since $g\iota(g)=b$ and $v(b)=0$, hence
  the three terms on the left side of the last displayed equation
  have different valuations if they are nonzero. Their sum vanishes
  only if they are all zero, which means that $x_0\in k$ and $x_1=0$.
\end{proof}

The main tool for the proof of the main theorem is a degree map on a
subring of $Q_{\FQ}$, which we now introduce. Consider the subring
$\fq$ of $K[t,t^{-1}]$ fixed under $\iota$,
\[
  \fq=K[t,t^{-1}]^\iota\subset \FQ,
  \qquad\text{and}\qquad
  \calQ=Q\otimes_k\fq\subset Q_\FQ.
\]

\begin{prop}
  \label{prop:defcalQ}
  The field of fractions of $\fq$ is $\FQ$, and
  \[
    K\otimes_k\fq = K[t,t^{-1}],
    \qquad
    \calQ= K[t,t^{-1}]\oplus K[t,t^{-1}]j.
  \]
  Every element in $\calQ$ has a unique expression of the form
  $\sum_{z\in\Z}x_zt^z$ with $x_z\in Q$ for all $z\in \Z$ and
  $x_z\neq0$ for only a finite number of $z\in\Z$.
\end{prop}

\begin{proof}
  Since $K(t)$ is the field of fractions of $K[t,t^{-1}]$,
every element in $\FQ$ can be written as $gh^{-1}=
g\iota(h)\cdot(h\iota(h))^{-1}$ with $g$, $h\in
K[t,t^{-1}]$ and $g\iota(h)$, $h\iota(h)\in\fq$. Hence $\FQ$
is the field of fractions of $\fq$. Using~\eqref{eq:base}, we
see that $K\otimes_k\fq=K[t,t^{-1}]$, so $\calQ=K[t,t^{-1}]\oplus
K[t,t^{-1}]j$. Since $tj=bjt^{-1}$, we have $K[t,t^{-1}]j =
jK[t,t^{-1}]$, and it follows that every element in $\calQ$ can be
uniquely written in the stated form.
\end{proof}

The proposition readily shows that $\varepsilon\in\calQ$. We may
therefore consider the left ideal in $\calQ$ generated by
$\varepsilon$,
\[
  \calI = \calQ\cdot\varepsilon\subset\calQ.
\]
Using Proposition~\ref{prop:defcalQ} and \eqref{eq:fond}, which shows
that $j\varepsilon\in K[t,t^{-1}]\varepsilon$, we see that
\[
  \calI = K[t,t^{-1}]\varepsilon = K\otimes_k\fq\varepsilon.
\]

A degree map on $\calQ$ is defined from the expression of each
$\mathfrak{X}\in \calQ$ as
\[
  \mathfrak{X}= \sum_{z\in\Z}x_zt^z
\]
with $x_z\in Q$ for all $z\in\Z$, and $x_z\neq0$ for only a finite
number of $z\in\Z$. If $\mathfrak{X}\neq0$, we define its degree
in~$\mathbb{N}$  
by comparing the absolute values of the exponents:
\[
  \deg(\mathfrak{X}) = \max\{\lvert z\rvert \mid
  x_z\neq0\}.
\]
We also let $\deg(0)=-\infty$, with the usual convention that
$-\infty<z$ for all $z\in\Z$.

\begin{remk}
  The restriction of the degree to $K[t,t^{-1}]$ is related to the
  $t$-adic and the $t^{-1}$-adic valuations on $K(t)$ by
  \[
    \deg(f) = - \min\{v_t(f),
    v_{t^{-1}}(f)\} = -\min\{v_t(f),\,v_t\bigl(\iota(f)\bigr)\}
    \qquad\text{for $f\in K[t,t^{-1}]$.}
  \]
  The valuations $v_t$ and $v_{t^{-1}}$ restrict to the same valuation
  $v_\infty$ on $\FQ$; they are the extensions of $v_\infty$ to
  $K(t)$, see \cite[Remark~A.8]{MT}. (This result is not used in the
  sequel.) 
\end{remk}

The degree map induces a filtration of $\calI$, for which the next proposition provides two equivalent descriptions. 

\begin{prop}
  \label{prop:degI}
  For $d\in\mathbb{N}$, let $\calI_{\leq d} =
  \{\xi\in\calI\mid 
  \deg(\xi)\leq d\}$. Then
  \[
    \calI_{\leq0}=\{0\} \qquad\text{and}\qquad
    \calI_{\leq d} = \Bigl\{\sum_{i=0}^{d-1}x_it^i\varepsilon \mid
    x_i\in Q\Bigr\}
    =\Bigl\{\sum_{z=-d}^{d-1} y_zt^z\varepsilon \mid
    y_z\in K\Bigr\}\qquad\text{for $d\geq1$.}
  \]
\end{prop}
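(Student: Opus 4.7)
The plan is to use Proposition~\ref{prop:calI} to write each $\xi\in\calI$ uniquely as $f\varepsilon$ with $f\in K[t,t^{-1}]$, then compute the $\calQ$-expansion $\xi=\sum_zx_zt^z$ (with $x_z\in Q$) so as to translate the bound $\deg(\xi)\leq d$ into a condition on the Laurent coefficients of $f$. The key ingredients are the commutation rule $t^zj=b^zjt^{-z}$ and the identity $j\varepsilon=bt^{-1}\varepsilon$ recorded in~\eqref{eq:fond}.

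For the second description I would write $f=\sum_{z\in\Z}y_zt^z$ with $y_z\in K$ and expand
\[
  f\varepsilon = f + b^{-1}(tf)j = \sum_z y_zt^z + \sum_z b^{-1}y_{z-1}\,t^zj.
\]
Applying $t^zj=b^zjt^{-z}$ to the second sum and reindexing, this rewrites uniquely as $\xi=\sum_z x_zt^z$ with $x_z=y_z+b^{-z-1}y_{-z-1}j\in K\oplus Kj=Q$. Since the two summands of $x_z$ lie in complementary $K$-subspaces of $Q$, the vanishing $x_z=0$ is equivalent to $y_z=0$ together with $y_{-z-1}=0$. The bound $\deg(\xi)\leq d$ thus demands these two conditions for every $z$ with $|z|>d$, and the union of the two corresponding sets of indices is exactly $\{z\geq d\}\cup\{z\leq-d-1\}$, whence $f=\sum_{z=-d}^{d-1}y_zt^z$.

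For the first description I would use $jt^i=b^it^{-i}j$ together with $j\varepsilon=bt^{-1}\varepsilon$ to rewrite, for $x_i=a_i+b_ij\in Q$ with $a_i,b_i\in K$,
\[
  x_it^i\varepsilon = a_it^i\varepsilon + b_i\cdot b^it^{-i}\,j\varepsilon = \bigl(a_it^i + b_ib^{i+1}t^{-i-1}\bigr)\varepsilon.
\]
As $i$ runs through $\{0,\dots,d-1\}$ the exponents $i$ and $-i-1$ bijectively cover $\{-d,\dots,d-1\}$; hence summing produces $f\varepsilon$ with $f\in\sum_{z=-d}^{d-1}Kt^z$, and conversely every $(y_z)_{-d\leq z\leq d-1}\in K^{2d}$ is realised by the unique choice $a_i=y_i$, $b_i=y_{-i-1}b^{-i-1}$. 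This identifies the first set with the second.

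No real obstacle presents itself; the only subtlety is the asymmetric behaviour near the boundary caused by the index shift $z\mapsto-z-1$ in the formula for $x_z$, which forces $y_d=0$ and hence yields the asymmetric range $-d\leq z\leq d-1$ rather than a symmetric one from $-d$ to $d$.
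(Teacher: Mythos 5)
Your proof is correct and follows essentially the same route as the paper's: both use Proposition~\ref{prop:calI} to represent elements of $\calI$ as $f\varepsilon$ with $f\in K[t,t^{-1}]$, translate the degree bound via the observation that the coefficient $y_z$ feeds into the two slots $x_z$ and $x_{-z-1}$ of the $\calQ$-expansion, and then use the shift $z\mapsto -z-1$ coming from $j\varepsilon=bt^{-1}\varepsilon$ to pass between the $K$-parametrization and the $Q$-parametrization. Your version is perhaps marginally cleaner in that you compute the full expansion $\xi=\sum_z x_z t^z$ at once (so the equivalence $\deg(\xi)\leq d\Leftrightarrow y_w=0$ for $w\geq d$ or $w\leq-d-1$ falls out directly), whereas the paper argues term-by-term about degrees of the summands $y_zt^z\varepsilon$, but this is a presentational rather than a mathematical difference.
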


\begin{proof}
  Since $\calI=K[t,t^{-1}]\varepsilon$, every element in $\calI$ has
  the 
  form $\sum_{z=-m}^ny_zt^z\varepsilon$ for some $y_{-m}$, \ldots,
  $y_n\in K$ and $m$, $n\geq0$.
  If $y_zt^z\varepsilon\neq0$, then expanding $\varepsilon$ and using
  $t^zj = j\iota(t^z) = jb^zt^{-z}$, 
  we find 
  \[
    y_zt^z\varepsilon = y_zt^z +y_zt^zjt^{-1} = y_zt^z +
    b^zy_zjt^{-z-1},
  \]
  hence
  \[
    \deg(y_zt^z\varepsilon) = \max\{\lvert z\rvert,\; \lvert
    -z-1\rvert\}=
    \begin{cases}
      z+1&\text{if $z\geq0$,}\\
      -z&\text{if $z<0$.}
    \end{cases}
  \]
  This shows that $\sum_{z=-d}^{d-1}y_zt^z\varepsilon\in\calI_{\leq
    d}$ for $y_{-d}$, \ldots, $y_{d-1}\in K$.
  
  Now, suppose $y_{-m}$, \ldots, $y_n\in K$ for some $m$, $n\geq0$,
  and
  $
    \deg\bigl(\sum_{z=-m}^ny_zt^z\varepsilon\bigr) \leq d.
  $
  Since the degree of a sum of terms of different degrees is the
  maximum of the degrees of the terms, we must have
  $
   (y_nt^n+y_{-n-1}t^{-n-1})\varepsilon = 0$ if $n\geq d$, hence $y_n=y_{-n-1}=0$ because by
    Proposition~\ref{prop:I} the expression of an element in $I$ as
    $f\varepsilon$ with $f\in K(t)$ is unique. Therefore, $n\leq d-1$
    if $y_n\neq0$ and $m\leq d$ 
  if $y_{-m}\neq0$. Hence
  \[
    \calI_{\leq0}=\{0\} \qquad\text{and}\qquad
    \calI_{\leq d} = \Bigl\{\sum_{z=-d}^{d-1} y_zt^z\varepsilon \mid
    y_z\in K\Bigr\}.
  \]
  To complete the proof, observe that $jt^{-z-1}\varepsilon =
  b^{-z-1}t^{z+1}j\varepsilon= b^{-z}t^z\varepsilon$ for all
  $z\in\Z$. Therefore, for $y_z\in K$,
  \[
    y_zt^z\varepsilon = b^zy_zjt^{-z-1}\varepsilon.
  \]
  Using the substitution $i=-z-1$, it follows that $\sum_{z=-d}^{-1}
  y_zt^z\varepsilon = 
  \sum_{i=0}^{d-1} b^{-i-1}y_{-i-1}jt^i\varepsilon$, hence
  \begin{equation}
    \label{eq:degI}
    \sum_{z=-d}^{d-1} y_zt^z\varepsilon = \sum_{i=0}^{d-1}
    (y_i+b^{-i-1}y_{-i-1}j)t^i\varepsilon. 
  \end{equation}
  On the other hand, for $x_0$, \ldots, $x_{d-1}\in Q$ we may find
  $y_{-d}$, \ldots, $y_{d-1}\in K$ such that $x_i=y_i+b^{-i-1}y_{-i-1}j$
  for $i=0$, \ldots, $d-1$; then~\eqref{eq:degI} yields
  \[
    \sum_{i=0}^{d-1}x_it^i\varepsilon = \sum_{z=-d}^{d-1}
    y_zt^z\varepsilon.
  \]
  Therefore, $
    \bigl\{ \sum_{z=-d}^{d-1} y_zt^z\varepsilon\mid y_z\in K\bigr\} =
    \bigl\{\sum_{i=0}^{d-1} x_it^i\varepsilon \mid x_i\in Q\bigr\}$
    and the proof is complete.
  \end{proof}

The following degree computation related to the alternating form $a$
of Proposition~\ref{prop:defa} is needed in Lemma~\ref{lem:deg} below.

\begin{lemma}
  \label{lem:degcomp}
  For $\xi$, $\eta\in\calI$, we have $a(\xi,\eta)\in\fq$. Moreover,
  for $d$, $e\in\mathbb{N}$ and $x\in Q$,
  \[
    \deg a(t^d\varepsilon, xt^e\varepsilon) \leq d+e+1.
  \]
  If $\deg a(t^d\varepsilon, xt^e\varepsilon)< d+e+1$, then $x\in K$.
\end{lemma}

\begin{proof}
  Since $\calI= K\otimes_k\fq\varepsilon$, it is clear from the
  definition that $a(\xi,\eta)\in\fq$ for $\xi$, $\eta\in\calI$.

  Let $x=x_0+x_1j\in Q$ with $x_0$, $x_1\in K$. Using~\eqref{eq:fond},
  we obtain
  \[
    xt^e\varepsilon = (x_0t^e +
    x_1b^{e+1}t^{-e-1})\varepsilon,
  \]
  hence the definition of $a$ yields
  \begin{align*}
    a(t^d\varepsilon, xt^e\varepsilon)
    & = u\bigl(\iota(t^d)(x_0t^e +
    x_1b^{e+1}t^{-e-1})-t^d\iota(x_0t^e +
    x_1b^{e+1}t^{-e-1})\bigr)
    \\
    & = u\bigl(x_0b^dt^{e-d} - \iota(x_0)b^et^{d-e} +
      x_1b^{d+e+1}t^{-d-e-1} - \iota(x_1)t^{d+e+1}\bigr).
  \end{align*}
  Since $d$, $e\geq0$, this last expression readily shows that $\deg
  a(t^d\varepsilon, xt^e\varepsilon)\leq d+e+1$. Moreover, the
  inequation is strict if and only if $x_1=0$, which amounts to $x\in
  K$. 
\end{proof}

\section{Proof of the theorem}
\label{sec:proof}

The argument is given at the end of this section. The core of the
proof lies in Lemma~\ref{lem:key}, which is a crucial tool to reduce
the degree of a given isotropic vector. We start with some technical
preparation, which mostly consists of some degree computations.

Throughout this section we fix a generalized quadratic space
$(V,[s])$ over $Q$ and an element $u\in K^\times$ such that
$\iota(u)=-u$, which yields a nonzero alternating bilinear form
$a\colon I\times I\to F$ as in Proposition~\ref{prop:defa}. After
scalar extension to $F$, the form $a$ is used to define a quadratic
form $q_{a*s}$ on the $F$-vector space $V\otimes_QF$ by the Morita
equivalence of \S\,\ref{subsec:Morita}. We write simply $q$ for
$q_{a*s}$ and $b_q$ for its polar form:
\[
  q\colon V\otimes_QI\to F, \qquad q(v\otimes\xi) = a(\xi,s(v,v)\xi)
  \qquad\text{for $v\in V$ and $\xi\in I$,}
\]
and, for $v$, $w\in V$ and $\xi$, $\eta\in I$,
\[
  b_q(v\otimes\xi, w\otimes\eta) = q(v\otimes\xi+w\otimes\eta) -
  q(v\otimes\xi) - q(w\otimes\eta) = a(\xi,h_s(v,w)\eta).
\]
Proposition~\ref{prop:nonsing} shows that $q$ is isotropic if and only
if the generalized quadratic form on the $Q_\FQ$-module
$V\otimes_k\FQ$ 
obtained from $[s]$ by scalar extension from $k$ to $F$ is
isotropic. Our goal is to show that $[s]$ is isotropic if $q$ is
isotropic.
\medbreak

Since $\FQ$ is the field of fractions of $\fq$ by
Proposition~\ref{prop:defcalQ}, it follows that $I=\calI\otimes_\fq
\FQ$, hence for every $v\in V\otimes_QI$ there exists $\lambda\in
\FQ^\times$ such that $v\lambda\in V\otimes_Q\calI$. As
$q(v\lambda)=0$ if and only if $q(v)=0$, it suffices to consider
isotropic vectors in $V\otimes_Q\calI$. Note that the restrictions of
$q$ and $b_q$ to $V\otimes_Q\calI$ take values in $\fq$, because
$a(\calI,\calI)\subset\fq$: see Lemma~\ref{lem:degcomp}. The degree
filtration on $\calI$ yields a filtration on $V\otimes_Q\calI$: each
vector in $V\otimes_Q\calI$ lies in $V\otimes_Q\calI_{\leq d}$ for
some $d\in\mathbb{N}$.

\begin{lemma}
  \label{lem:deg}
  For $d\in\mathbb{N}$,
  \[
    V\otimes_Q\calI_{\leq d+1} = \Bigl\{
    \sum_{i=0}^d v_i\otimes t^i\varepsilon\mid v_0,\,\ldots,\,v_d\in
    V\Bigr\}.
  \]
  Let $d$, $e\in\mathbb{N}$. If
  \[
    v=(v_{d}\otimes t^{d}\varepsilon) + v'\in V\otimes_Q\calI_{\leq d+1}
    \qquad\text{and}\qquad
    w = (w_{e}\otimes t^{e}\varepsilon) + w'\in V\otimes_Q\calI_{\leq e+1}
  \]
  with $v_{d}$, $w_{e}\in V$ and $v'\in V\otimes_Q\calI_{\leq
    d}$, $w'\in V\otimes_Q\calI_{\leq e}$, then
  \[
    \deg b_q(v,w) \leq d+e+1 \qquad\text{and}\qquad
    \deg q(v) \leq 2d+1.
  \]
  Moreover, if $\deg b_q(v,w)< d+e+1$, then $h_s(v_{d},
  w_{e})\in K$. If $\deg q(v) <2d+1$, then
  $s(v_{d},v_{d})\in K$.
\end{lemma}

\begin{proof}
  If $(e_j)_{j=1}^n$ is a $Q$-base of $V$, vectors in 
  $V\otimes_Q\calI_{\leq d+1}$ can be written as $\sum_{j=1}^n
  e_j\otimes\xi_j$ with $\xi_1$, \ldots, $\xi_n\in\calI_{\leq d+1}$.
  Proposition~\ref{prop:degI} shows that every $\xi_j\in\calI_{\leq
    d+1}$ has an expression $\xi_j=\sum_{i=0}^{d}
  x_{ji}t^i\varepsilon$ for some $x_{ji}\in Q$. Then
  \[
    \sum_{j=1}^n e_j\otimes\xi_j = \sum_{i=0}^{d} \bigl(\sum_{j=1}^n
    e_jx_{ji}\bigr) \otimes t^i\varepsilon,
  \]
  hence every vector in $V\otimes_Q\calI_{\leq d+1}$ can be
  written in the form
  $
  \sum_{i=0}^{d}v_i\otimes t^i\varepsilon
  $
  for some $v_0$, \ldots, $v_{d}\in V$.

  Now, let $v\in V\otimes_Q\calI_{\leq d+1}$ and $w\in
  V\otimes_Q\calI_{\leq e+1}$ be as in the statement.
  We first prove the part concerning $b_q(v,w)$, by induction on
  $d+e$. For this, we expand
  \begin{equation}
    \label{eq:exp}
    b_q(v,w) = b_q(v_{d}\otimes t^{d}\varepsilon,
    w_{e}\otimes t^{e}\varepsilon) + b_q(v_{d}\otimes
    t^{d}\varepsilon, w') + b_q(v',w_{e}\otimes
    t^{e}\varepsilon) +b_q(v',w').
  \end{equation}
  The first term on the right side is
  \[
    b_q(v_{d}\otimes t^{d}\varepsilon,
    w_{e}\otimes t^{e}\varepsilon) = a(t^{d}\varepsilon,
    h_s(v_{d},w_{e})t^{e}\varepsilon). 
  \]
  Lemma~\ref{lem:degcomp} shows that
  the degree of this term is at most $d+e+1$, and that $h_s(v_{d},
  w_{e})\in K$ 
  when its degree is strictly less than $d+e+1$. The last three terms
  on the right side 
  of~\eqref{eq:exp} vanish if $d=e=0$, for then $v'=w'=0$, and if
  $d+e\geq1$ their degree 
  is at most $d+e$ by the induction hypothesis. The part of the
  lemma related to $b_q(v,w)$ is thus proved.

  The other part is proved similarly, by induction on $d$: we have
  \begin{equation}
    \label{eq:exp2}
    q(v) = q(v_{d}\otimes t^{d}\varepsilon) +
    b_q(v_{d}\otimes t^{d}\varepsilon,v') + q(v').
  \end{equation}
  The first term on the right is
  \[
    q(v_{d}\otimes t^{d}\varepsilon) = a(t^{d}\varepsilon,
    s(v_{d},v_{d})t^{d}\varepsilon). 
  \]
  Its degree is at most~$2d+1$ by
  Lemma~\ref{lem:degcomp}; and if it is strictly less than
  $2d+1$, then $s(v_{d},v_{d})\in K$. The other terms on the right
  side of~\eqref{eq:exp2} vanish if 
  $d=0$. If $d\geq1$, the degree of the second term is at most $2d$
  by the first part of the proof, and the degree of the third term is
  at most $2d-1$ by the induction hypothesis. Therefore,
  $\deg q(v) \leq 2d+1$.
\end{proof}

\begin{corol}
  \label{corol:deg}
  Let $v=(v_{d}\otimes t^{d}\varepsilon)+v'\in
  V\otimes_Q\calI_{\leq d+1}$, with $v_{d}\in V$ and $v'\in
  V\otimes_Q\calI_{\leq d}$, be an isotropic vector of $q$.
  If
  $s(v_{d},v_{d})\notin k$, then $h_s(v_{d},v_{d})\neq0$.
\end{corol}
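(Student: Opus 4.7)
The plan is to split the proof into two parts. First, I would reduce the conclusion $h_s(v_d, v_d) \neq 0$ to the intermediate claim that isotropy of $v$ forces $s(v_d, v_d) \in K$. Since the canonical involution $\invo$ of $Q$ restricts to $\iota$ on $K$, and the fixed field of $\iota$ is exactly $k$, the assumption $s(v_d, v_d) \notin k$ combined with $s(v_d, v_d) \in K$ immediately yields $h_s(v_d, v_d) = s(v_d, v_d) - \iota(s(v_d, v_d)) \neq 0$.

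Second, I would establish the intermediate claim by contradiction. Suppose $s(v_d, v_d) = x_0 + x_1 j$ with $x_1 \in K^\times$, so that either $s(v_d, v_d) \in Kj$ (when $x_0 = 0$) or $s(v_d, v_d) \notin K \cup Kj$ (when $x_0 \neq 0$). Lemma \ref{lem:degcomp} applied with $z_1 = z_2 = d$ then gives
\[
  \deg\bigl(a_u(t^d \varepsilon, s(v_d, v_d)\, t^d \varepsilon)\bigr) = \max\{\lvert z_1 - z_2\rvert,\;\lvert z_1 + z_2 + 1\rvert\} = 2d+1,
\]
and the computation in its proof exhibits a nonzero $t^{\pm(2d+1)}$ monomial in this element. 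Expanding
\[
  q(v) = q(v_d \otimes t^d \varepsilon) + b_q(v_d \otimes t^d \varepsilon, v') + q(v'),
\]
I would invoke Lemma \ref{lem:deg} applied to $v' \in V \otimes_Q \calI_{\leq (d-1)+1}$ (the case $d = 0$ being immediate since then $v' = 0$) to bound $\deg\bigl(b_q(v_d \otimes t^d \varepsilon, v')\bigr) \leq 2d$ and $\deg(q(v')) \leq 2d-1$. Consequently, the $t^{\pm(2d+1)}$ coefficients of $q(v)$ coincide with those of $q(v_d \otimes t^d \varepsilon)$ and are not all zero, so $q(v) \neq 0$, contradicting the isotropy of $v$.

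The main obstacle is really bookkeeping rather than conceptual: the ``moreover'' clause of Lemma \ref{lem:deg} cannot be quoted verbatim, because it presupposes $q(v) \neq 0$ for $\deg(q(v))$ to be defined at all. I would therefore re-run the same degree-matching argument inside the expansion of $q(v)$, exploiting that the top-degree monomials contributed by $q(v_d \otimes t^d \varepsilon)$ live at exponents $\pm(2d+1)$ that are unreachable by the remaining two summands, so no cancellation is possible.
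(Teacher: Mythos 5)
Your proof is correct and rests on exactly the same mechanism as the paper's: the decomposition $q(v)=q(v_d\otimes t^d\varepsilon)+b_q(v_d\otimes t^d\varepsilon,v')+q(v')$, the degree $2d+1$ of the leading term supplied by Lemma~\ref{lem:degcomp}, and the bounds $\leq 2d$ and $\leq 2d-1$ on the other two terms from Lemma~\ref{lem:deg} (including your correct observation that the ``moreover'' clause cannot be quoted for $v$ itself, which is also why the paper applies the degree estimate to $q(v_d\otimes t^d\varepsilon)$ alone). The only difference is organizational: you prove directly that isotropy forces $s(v_d,v_d)\in K$, uniformly in the characteristic, whereas the paper argues by contraposition from $h_s(v_d,v_d)=0$ and needs the degree argument only in characteristic~$2$, where the $\invo$-symmetric elements form $k+Kj$ rather than $k$.
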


\begin{proof}
  Suppose $v_{d}\neq0$ and $h_s(v_{d},v_{d})=0$. We have to show
  that $s(v_{d},v_{d})\in k$. By definition, $h_s(v_{d},v_{d})
  = s(v_{d},v_{d})-\overline{s(v_{d},v_{d})}$, hence
  $s(v_{d},v_{d})=\overline{s(v_{d},v_{d})}$. If $s(v_d,v_d)\notin k$,
  then $s(v_d,v_d)\notin K$ since $k=\{x\in K\mid \overline
  x=x\}$. Lemma~\ref{lem:deg} then yields 
  \[
    \deg q(v_{d}\otimes t^{d}\varepsilon) = 2d+1.
  \]
  Now, from $q(v)=0$ it follows that
  \[
    q(v_{d}\otimes t^{d}\varepsilon) = -b_q(v_{d}\otimes
    t^{d}\varepsilon, v') - q(v'),
  \]
  but Lemma~\ref{lem:deg} yields $\deg b_q(v_{d}\otimes
  t^{d}\varepsilon,v')\leq 2d$ and $\deg q(v')
  \leq 2d-1$, so this equation is impossible.
\end{proof}

The degree reduction argument in the proof of the theorem
relies on the following key result:

\begin{lemma}
  \label{lem:key}
  Let $v\in V$ and $\xi\in\calI$ be such that $\deg(\xi)=d+1\geq2$. If
  $\deg q(v\otimes\xi)\leq2d-1$, then
  $q(v\otimes\xi) = q(v\otimes\eta)$ for some
  $\eta\in\calI_{\leq d}$.
\end{lemma}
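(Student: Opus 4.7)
My plan is to begin with easy reductions. If $v=0$, take $\eta=0$. If $\alpha:=s(v,v)\in k$, then for every $\tau\in\calI$ the $\FQ$-bilinearity and alternation of $a_u$ (Proposition~\ref{prop:au}) give $q(v\otimes\tau)=a_u(\tau,\alpha\tau)=\alpha\,a_u(\tau,\tau)=0$, so again $\eta=0$ works. We may thus assume $v\neq 0$ and $\alpha\notin k$.

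Next, write $\xi=f\varepsilon$ with $f=\sum_{z=-d-1}^{d}y_zt^z\in K[t,t^{-1}]$, and decompose $\alpha=\alpha_0+\alpha_1j$ with $\alpha_0,\alpha_1\in K$. Using $j\varepsilon=bt^{-1}\varepsilon$ to rewrite $\alpha(f\varepsilon)=(\alpha_0f+bt^{-1}\alpha_1\iota(f))\varepsilon$, the definition of $a_u$ yields the explicit formula
\[
q(v\otimes f\varepsilon)=A\,N(f)+B\,bt^{-1}\iota(f)^2+\iota(B)\,tf^2\in\fq,
\]
where $A=u(\alpha_0-\iota(\alpha_0))\in k$, $B=u\alpha_1\in K$, and $N(f)=f\iota(f)$. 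Extracting the coefficients of $t^{2d+1}$ and $t^{2d}$ from this expression, the hypothesis $\deg q(v\otimes\xi)\leq 2d-1$ translates into two algebraic relations on the extremal coefficients $y_{-d-1},y_{-d},y_{d-1},y_d$; the first reproduces the condition $\overline{x_d}\alpha x_d\in K$ of Lemma~\ref{lem:deg}, with $x_d=y_d+b^{-d-1}y_{-d-1}j$.

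The heart of the proof is then to produce $\eta=g\varepsilon$ with $g$ supported in $\{-d,\ldots,d-1\}$ such that $q(v\otimes g\varepsilon)=q(v\otimes f\varepsilon)$. I will seek $g=f-y_dt^d-y_{-d-1}t^{-d-1}+\delta$, with $\delta\in\sum_{z=-d}^{d-1}Kt^z$ to be determined so as to compensate for the $q$-contribution of the removed extremal terms; by construction $g$ has support in $\{-d,\ldots,d-1\}$. Plugging into the formula above, the condition $q(v\otimes g\varepsilon)-q(v\otimes f\varepsilon)=0$ reduces to a quadratic equation in $\delta$ whose only potential high-degree obstructions (coefficients of $t^{\pm 2d}$ and $t^{\pm(2d+1)}$) are precisely those the degree hypothesis forces to vanish.

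The main obstacle will be producing $\delta$ in closed form and verifying that it lies in the prescribed support window. I expect to split on whether $\alpha\in K$: the special case $B=0$ collapses to a Cassels--Pfister-type norm-reduction identity for $A\,N(f)$, while the general case $B\neq 0$ requires invoking the vanishing of the $t^{2d}$ coefficient as the additional relation needed to close the calculation.
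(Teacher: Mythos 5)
Your setup is sound --- the explicit formula for $q(v\otimes f\varepsilon)$ is correct, and you rightly observe that the hypothesis amounts to the vanishing of the coefficients of $t^{2d+1}$ and $t^{2d}$ --- but the proof stops exactly where the real work begins: you never produce $\eta$ (equivalently, your correction term $\delta$), and the ansatz you propose does not obviously close. Two ingredients of the paper's argument are missing. First, a normalization: writing $\xi=(x_dt^d+\cdots+x_0)\varepsilon$ with $x_i\in Q$ (the first description in Proposition~\ref{prop:degI}) and replacing $v$ by $vx_d$, one may assume $x_d=1$; Lemma~\ref{lem:deg} then gives $s(v,v)\in K$ outright, and $s(v,v)\in k$ is excluded because it would force $q(v\otimes\xi)=0$ while the hypothesis presupposes $q(v\otimes\xi)\neq0$. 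This eliminates your ``general case $B\neq0$'' entirely --- without the normalization the degree hypothesis only yields $\overline{x_d}\,s(v,v)\,x_d\in K$, and attacking $\alpha\notin K$ head-on with a correction term is a dead end. Second, you do not extract the content of the vanishing of the $t^{2d}$-coefficient, which (after the normalization) is that $x_{d-1}\in K$ as well, so that $\xi=z\varepsilon$ with $z\in K[t,t^{-1}]$ supported on $\{-d+1,\dots,d\}$.

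Once these two facts are in hand, the equation you would need to solve --- in your notation, $N(g)=N(f)$ with $g$ supported in $\{-d,\dots,d-1\}$, since $q(v\otimes z\varepsilon)=uz\iota(z)\bigl(s(v,v)-\iota(s(v,v))\bigr)$ when $s(v,v)\in K$ --- has a one-line solution that your proposal does not contain: take $g=\iota(z)$. Indeed $\iota$ sends $t^z$ to $b^zt^{-z}$, hence reflects the support window $\{-d+1,\dots,d\}$ onto $\{-d,\dots,d-1\}$, while $z\iota(z)$ is $\iota$-invariant; therefore $q(v\otimes\iota(z)\varepsilon)=q(v\otimes z\varepsilon)$ and $\iota(z)\varepsilon\in\calI_{\leq d}$. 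This reflection is the crux of the lemma; your appeal to a ``Cassels--Pfister-type norm-reduction identity'' gestures at the right phenomenon but is not an argument, and no closed form for $\delta$ is given or evident. As it stands, the proposal is a plausible plan with the decisive step missing.
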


\begin{proof}
  If $q(v\otimes\xi)=0$, the lemma holds with $\eta=0$. For the rest
  of the proof, we assume $q(v\otimes\xi)\neq0$.
  By Proposition~\ref{prop:degI} we may write
  $\xi=(x_{d}t^{d}+\cdots+x_0)\varepsilon$ with $x_{d}$,
  \ldots, $x_0\in Q$ and $x_{d}\neq 0$. Substituting $vx_d$ for $v$,
  we may (and will) assume $x_d=1$. Lemma~\ref{lem:deg} then yields
  $s(v,v)\in K$. If $s(v,v)\in k$, then
  \[
    q(v\otimes\xi) = a(\xi,s(v,v)\xi) = s(v,v) a(\xi,\xi)=0.
  \]
  This case is excluded since
  $q(v\otimes\xi)\neq0$, hence $s(v,v)\in K\setminus k$.

  By Proposition~\ref{prop:degI} we
  have
  \[
    (x_{d-2}t^{d-2}+\cdots+x_0)\varepsilon = (y_{d-2}t^{d-2}+\cdots +
    y_{-d+1}t^{-d+1})\varepsilon
  \]
  for some $y_{d-2}$, \ldots, $y_{-d+1}\in K$. We prove below:
  \medbreak

  \noindent\textit{Claim:} $x_{d-1}$ lies in $K$.
  \medbreak

  Assuming the claim, we complete the proof as follows: we write
  \[
    \xi=z\varepsilon \qquad\text{with}\qquad
    z=t^d+x_{d-1}t^{d-1}+y_{d-2}t^{d-2}+\cdots+y_{-d+1}t^{-d+1} \in
    K[t,t^{-1}].
  \]
  Since $s(v,v)\in K$, we get, by definition of $q$ and $a$,
  \begin{equation}
    \label{eq:key}
    q(v\otimes\xi) = a(\xi,s(v,v)\xi) = u\bigl(\iota(z)s(v,v)z -
    z\iota(s(v,v)z)\bigr) =
    uz\iota(z)\bigl(s(v,v)-\iota(s(v,v))\bigr).
  \end{equation}
  The rightmost expression does not change when we change $z$ into
  $\iota(z)$, 
  hence $q(v\otimes\xi) = q(v\otimes\iota(z)\varepsilon)$. 
  The lemma will follow because
  \[
    \iota(z)\varepsilon=(b^dt^{-d}+b^{d-1}\iota(x_{d-1})t^{1-d} +
    b^{d-2}\iota(y_{d-2}) t^{2-d} + \cdots +
    b^{-d+1}\iota(y_{-d+1})t^{d-1})\varepsilon \in \calI_{\leq d}.
  \]
  \medbreak

  To prove the claim,
  let $\xi_0=(x_{d-2}t^{d-2}+\cdots+x_0)\varepsilon\in \calI_{\leq
    d-1}$, so $\xi=(t^d+x_{d-1}t^{d-1})\varepsilon+\xi_0$ and
  \[
    q(v\otimes\xi) = q(v\otimes(t^d+x_{d-1}t^{d-1})\varepsilon) +
    b_q(v\otimes(t^d+x_{d-1}t^{d-1})\varepsilon, v\otimes\xi_0) +
    q(v\otimes\xi_0).
  \]
  Lemma~\ref{lem:deg} shows that the middle
  term on the right side has degree at most $2d-1$ and the rightmost
  term has degree at most $2d-3$. Therefore, the hypothesis that $\deg
  q(v\otimes\xi)\leq 2d-1$ implies
  \begin{equation}
    \label{eq:key2}
    \deg q(v\otimes(t^d+x_{d-1}t^{d-1})\varepsilon) \leq
    2d-1.
  \end{equation}
  Now, we have
  \[
    q(v\otimes(t^d+x_{d-1}t^{d-1})\varepsilon)=q(v\otimes
    t^d\varepsilon) + b_q(v\otimes t^d\varepsilon, v\otimes
    x_{d-1}t^{d-1}\varepsilon) + q(v\otimes x_{d-1}t^{d-1}\varepsilon)
  \]
  and Lemma~\ref{lem:deg} shows that the rightmost term has degree at
  most $2d-1$. Moreover, since $s(v,v)\in K\setminus
  k$, the same computation as for~\eqref{eq:key} yields
  \[
    q(v\otimes t^d\varepsilon) = ub^d\bigl(s(v,v)-\iota(s(v,v))\bigr)
    \in K^\times,
  \]
  hence $\deg q(v\otimes t^d\varepsilon) = 0$. Therefore,
  \eqref{eq:key2} implies
  \[
    \deg b_q(v\otimes t^d\varepsilon,v\otimes
    x_{d-1}t^{d-1}\varepsilon)\leq 2d-1,
    \qquad\text{i.e.,}\quad
    \deg a(t^d\varepsilon, h_s(v,vx_{d-1})t^{d-1}\varepsilon) \leq
    2d-1, 
  \]
  hence $h_s(v,\,vx_{d-1})\in K$ by
  Lemma~\ref{lem:deg}. But
  \[
    h_s(v,\,vx_{d-1}) = h_s(v,\,v)x_{d-1} = \bigl(s(v,v) -
    \overline{s(v,v)}\bigr) x_{d-1}
  \]
  and $s(v,v)-\overline{s(v,v)}\in K^\times$ because $s(v,v)\in
  K\setminus k$, hence $x_{d-1}\in K$. This completes the proof.
\end{proof}

\begin{proof}[Proof of the theorem]
  Suppose $q$ is isotropic. As observed at the beginning of this
  section, by clearing denominators we may find an isotropic vector in
  $V\otimes_Q\calI$. We will
  show that the minimal $d\in\mathbb{N}$ such that
  $V\otimes_Q\calI_{\leq d+1}$ 
  contains a $q$-isotropic vector is~$0$; it will quickly follow
  that $s(v,v)\in k$ for some nonzero $v\in V$, which means that $[s]$
  is isotropic and proves the theorem. 

  Suppose that $V\otimes_Q\calI_{\leq d+1}$ contains an isotropic
  vector 
  for some $d\geq1$. This vector has the form
  $\sum_{i=0}^{d}v_i\otimes t^i\varepsilon$ for some $v_0$, \ldots,
  $v_{d}\in V$. It is already in $V\otimes_Q\calI_{\leq
    d}$ if $v_{d}=0$, so we need to consider only the case where
  $v_{d}\neq0$. We may also assume $s(v_{d},v_d)\notin k$,
  otherwise we readily get the $q$-isotropic vector
  $v_{d}\otimes\varepsilon\in V\otimes_Q\calI_{\leq1}$. 
  Now, Corollary~\ref{corol:deg} shows that $h_s(v_{d},v_{d})\neq0$.
  We may then project each $v_i$ on $v_{d}$: letting
  \[
    x_i=h_s(v_{d},v_{d})^{-1}h(v_{d},v_i)
    \quad\text{and}\quad
    v'_i=v_i-v_{d}x_i
    \quad\text{for $i=0$, \ldots, $d-1$,}
  \]
  we get $h_s(v_{d},v'_i)=0$ for $i=0$, \ldots, $d-1$, which implies
  that $b_q(v_d\otimes\xi,v'_i\otimes\xi')=0$ for all $\xi$,
  $\xi'\in\calI$.  Rewrite the
  given $q$-isotropic vector
  \begin{equation}
    \label{eq:rewrite}
    \sum_{i=0}^{d} v_i\otimes t^i\varepsilon =
    v_{d}\otimes(t^{d}+x_{d-1}t^{d-1}+\cdots+x_0)\varepsilon +
    \Bigl(\sum_{i=0}^{d-1}v'_i\otimes t^i\varepsilon\Bigr).
  \end{equation}
  Since this vector is isotropic and $h_s(v_{d},v'_i)=0$ for all $i$,
  it follows that
  \[
    q(v_{d}\otimes(t^{d}+x_{d-1}t^{d-1}+ \cdots +
    x_0)\varepsilon) = -q\Bigl(\sum_{i=0}^{d-1} v'_i\otimes
    t^i\varepsilon\Bigr).
  \]
  Lemma~\ref{lem:deg} shows that the right side has degree
  $\leq2d-1$, hence Lemma~\ref{lem:key} yields $\eta\in\calI_{\leq
    d}$ such that
  \[
    q(v_{d}\otimes\eta) =
    q(v_{d}\otimes(t^{d}+x_{d-1}t^{d-1}+ \cdots + 
    x_0)\varepsilon).
  \]
  Then
  \[
    q\Bigl((v_d\otimes\eta) + \sum_{i=0}^{d-1} v'_i\otimes
    t^i\varepsilon\Bigl)=0
    \qquad\text{with}
    \qquad
    (v_d\otimes\eta) + \sum_{i=0}^{d-1} v'_i\otimes
    t^i\varepsilon\in V\otimes_Q\calI_{\leq d}.
  \]
  It remains to prove that this vector in nonzero, or equivalently, in view of~\eqref{eq:rewrite}, that 
  \[
    \sum_{i=0}^d v_i\otimes t^i\varepsilon \not= v_d\otimes\xi
    \qquad\text{where}\qquad
    \xi= (t^{d}+x_{d-1}t^{d-1}+ \cdots + 
    x_0)\varepsilon-\eta.
  \]
  To prove this, we use Proposition~\ref{prop:defa}. Since
  $\eta\in\calI_{\leq d}$, the element $\xi$ is nonzero. Moreover, we
  have $s(v_d,v_d)\not \in k$. Therefore,  
        $a(\xi, s(v_d,v_d)\xi)=q(v_d\otimes\xi)\not=0$, and it follows
        $v_d\otimes \xi$ is not equal to the isotropic vector
        $\sum_{i=0}^d v_i\otimes t^i\varepsilon$. 
 
  Therefore, $(v_d\otimes\eta) +
  \sum_{i=0}^{d-1} v'_i\otimes t^i\varepsilon$ is an isotropic vector
  in $V\otimes_Q\calI_{\leq d}$, 
  and we may repeat the argument if $d\geq2$ until we find a
  $q$-isotropic vector in $V\otimes_Q\calI_{\leq1}$.

  We have thus shown that $V\otimes_Q\calI_{\leq1}$ contains a
  $q$-isotropic vector. This vector has the form
  $v\otimes\varepsilon$ for some nonzero $v\in V$, and
  $a(\varepsilon,\, s(v,v)\varepsilon)=q(v\otimes\varepsilon) = 0$.
  By Proposition~\ref{prop:defa}, this implies that $s(v,v)\in k$,
  hence $[s]$ is isotropic. 
\end{proof}

\bibliographystyle{amsalpha}
\bibliography{QTconic}

\end{document}